\documentclass[11pt,twoside]{amsart}

\usepackage{amsxtra}
\usepackage{color}
\usepackage{amsopn}
\usepackage{amsmath,amsthm,amssymb,arydshln}
\usepackage{mathrsfs,mathtools}
\usepackage{stmaryrd}
\usepackage{hyperref}
\usepackage{enumerate}
\usepackage{xcolor}
\usepackage{pifont}
\usepackage{adjustbox}
\usepackage{tikz}

\newtheorem{theorem}{Theorem}[section]
\newtheorem{corollary}[theorem]{Corollary}
\newtheorem{proposition}[theorem]{Proposition}
\newtheorem{lemma}[theorem]{Lemma}
\newtheorem*{theorem*}{Theorem}
\theoremstyle{definition}
\newtheorem{definition}[theorem]{Definition}

\newtheorem{example}[theorem]{Example}
\newtheorem{remark}[theorem]{Remark}

\def\sideremark#1{\ifvmode\leavevmode\fi\vadjust{
		\vbox to0pt{\hbox to 0pt{\hskip\hsize\hskip1em
				\vbox{\hsize3cm\tiny\raggedright\pretolerance10000
					\noindent #1\hfill}\hss}\vbox to8pt{\vfil}\vss}}}


\newcommand{\bC}{\mathbb{C}}

\newcommand{\bR}{\mathbb{R}}
\newcommand{\bZ}{\mathbb{Z}}

\newcommand{\beq}{\begin{equation}}
\newcommand{\eeq}{\end{equation}}
\renewcommand{\a}{\alpha}
\renewcommand{\b}{\beta}

\renewcommand{\d}{\delta}
\newcommand{\e}{\epsilon}

\renewcommand{\l}{\lambda}

\renewcommand{\o}{\omega}

\newcommand{\s}{\sigma}




\newcommand{\U}{{\mathrm U}}
\newcommand{\SU}{{\mathrm{SU}}}

\newcommand{\SO}{{\mathrm {SO}}}


\DeclareMathOperator\Ad{Ad}
\DeclareMathOperator\ad{ad}



\newcommand{\gge}{\mathfrak{e}}
\newcommand{\gf}{\mathfrak{f}}
\renewcommand{\gg}{\mathfrak{g}}
\newcommand{\gh}{\mathfrak{h}}
\newcommand{\gk}{\mathfrak{k}}
\newcommand{\gl}{\mathfrak{l}}
\newcommand{\gm}{\mathfrak{m}}
\newcommand{\gn}{\mathfrak{n}}

\newcommand{\gp}{\mathfrak{p}}
\newcommand{\gq}{\mathfrak{q}}

\newcommand{\gs}{\mathfrak{s}}
\newcommand{\gt}{\mathfrak{t}}
\newcommand{\gu}{\mathfrak{u}}

\newcommand{\gz}{\mathfrak{z}}

\newcommand{\so}{\mathfrak{so}}
\newcommand{\su}{\mathfrak{su}}

\newcommand{\gsp}{\mathfrak{sp}}
\newcommand{\gsl}{\mathfrak{sl}}



\newcommand\es[1]{\epsilon_{\sigma({#1})}}

\textheight=8in
\textwidth=6in
\oddsidemargin=0.25in
\evensidemargin=0.25in

\numberwithin{equation}{section}

\begin{document}

\title[Homogeneous almost K\"ahler manifolds and the Chern-Einstein equation ] {Homogeneous almost K\"ahler manifolds and the Chern-Einstein equation}

\author{Dmitri V. Alekseevsky}
\address{Institute for Information Transmission Problem RAS, B. Karetnuj per., 19, 127951, Moscow,RUSSIA {\it and } Faculty of Science, University of Hradec Kralove, Rokitanskeho 62, Hradec Kralove 50003, CZECH REPUBLIC.}
\email{dalekseevsky@iitp.ru}
\author{ and Fabio Podest\`a }
\address{Dipartimento di Matematica e Informatica "Ulisse Dini", Universit\`a di Firenze, V.le Morgagni 67/A, 50100 Firenze, ITALY}
\email{fabio.podesta@unifi.it}


\subjclass[2010]{53C25, 53C30}
\keywords{Symplectic manifolds, homogeneous spaces, Chern Ricci form.}

\begin{abstract} Given a non compact semisimple Lie group $G$ we describe all homogeneous spaces $G/L$ carrying an invariant almost K\"ahler structure $(\o,J)$. When $L$ is abelian and $G$ is of classical type, we classify all such spaces which are Chern-Einstein, i.e. which satisfy $\rho = \l\o$ for some $\l\in\mathbb R$, where $\rho$ is the Ricci form associated to the Chern connection.\end{abstract}

\maketitle
\section{Introduction}
Given an almost K\"ahler manifold $(M,g,J)$, that is  an Hermitian manifold  with  closed  K\"ahler  form
$\omega = g \circ J$, the  Ricci form $\rho$ of the associated Chern connection $D$ is a closed $2$-form which represents the cohomology class $2\pi c_1(M,J)$ in $H^2(M,\bR)$.  The Chern-Einstein equation $\rho = \lambda \o$ for some $\l\in\bR$ gives a very natural generalization of the Einstein condition. This equation has been considered in \cite{AD} and more recently in \cite{DV}. More generally, we can consider a symplectic manifold $(M,\o)$ and study the existence of a compatible almost complex structure $J$ so that the Chern- Einstein equation $\rho = \l\o$ is satisfied. In \cite{DV} several examples of non-compact homogeneous examples of Chern-Einstein almost K\"ahler manifolds are given and some structure theorems are proved. \par
In this work we focus on  non-compact symplectic manifolds $(M,\o)$ which admit a (non-compact) semisimple Lie group $G$ of transitive symplectomorphisms with compact isotropy subgroup. Our first result is stated in Theorem \ref{Thm1} and it shows that there exists a unique $G$-homogeneous almost complex structure $J$ which is compatible with $\o$. This result has to be contrasted to the well-known case when $G$ is compact and the homogeneous almost complex structure is necessarily integrable (see e.g. \cite{WG}). We then study the Chern-Einstein equation in the homogeneous setting and our main  result  is  the explicit  classification  of invariant Chern-Einstein  
metrics  on regular  adjoint  orbits of the  form  $G/L$                     where $G$ is a classic non-compact simple
Lie  group   and  $L$  is a  compact abelian subgroup (a maximal  torus  of   $G$). We  prove  that  such a metric  exists precisely for $\gg =   \gsl(2,\bR)$  and  $\su(p+1,p)$ ($p\geq 1$) where $\gg$ is the Lie algebra of $G$. In particular,  we prove  the  following

\begin{theorem}\label{MT} Let $(M=G/L,\o)$ be a homogeneous symplectic manifold of  a  non compact  semisimple  Lie  group  and $L$ compact. Then there exists a unique invariant almost complex structure $J$ compatible with $\o$ so that $(M,\o,J)$ is almost K\"ahler. \par
If  the  group  $G$   is  simple non compact of classical  type   and $L$ is abelian, then $(M,\o,J)$ is Chern Einstein, say $\rho =\l\omega$, if and only if one of the following occurs:
	\begin{enumerate}
		\item[i)] $\l<0$ and $\gg = \gsl(2,\mathbb R)$
		\item[ii)] $\l = 0$ and  $\gg = \su(p+1,p)$,\ $p\geq 1$.
		\end{enumerate}
\end{theorem}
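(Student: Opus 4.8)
The first assertion is Theorem \ref{Thm1}, so I treat only the classification; the plan is to turn the Chern-Einstein equation into a linear condition in the dual of a Cartan subalgebra and then to run through the classical simple non-compact Lie algebras. First I would recall the structure of $M$. Since $G$ is semisimple and $L$ compact, $(M,\o)$ is (a covering of) a coadjoint orbit, and the hypotheses that $L$ be abelian and the orbit regular force $L=Z_G(\x)$ for a regular elliptic $\x$; hence $L=T$ is a maximal torus of a maximal compact subgroup $K$ and $\gg$ carries a compact Cartan subalgebra $\gt_0=\Lie(T)$, i.e. $\mathrm{rk}\,\gg=\mathrm{rk}\,\frk$. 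Among classical simple non-compact Lie algebras this occurs precisely for $\su(p,q)$, $\gsp(n,\bR)$, $\gsp(p,q)$, $\so^*(2n)$ and $\so(p,q)$ with $p$ or $q$ even (together with the small coincidences such as $\gsl(2,\bR)\cong\su(1,1)$). Complexifying, $\gg^{\bC}=\gh\oplus\bigoplus_{\a\in\D}\gg_\a$ with $\gh=\gt_0^{\bC}$, and $\D=\D_c\sqcup\D_n$ splits into compact and non-compact roots ($\gg_\a\subset\frk^{\bC}$, resp. $\gg_\a\subset\gp^{\bC}$); the isotropy module is $\gm^{\bC}=\bigoplus_{\a\in\D}\gg_\a$. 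The invariant symplectic form corresponds to a regular $\x\in\gh^*$ with $\a(\x)\neq0$ for all $\a$, and by Theorem \ref{Thm1} the compatible $J$ is the unique one with $\gm^{1,0}=\bigoplus_{\a\in R^+}\gg_\a$, where $R^+\subset\D$ is the subset picked out by positivity of $g=\o(\cdot,J\cdot)$ and is determined by sign conditions relating $\a(\x)$ to the type (compact or non-compact) of $\a$. Thus $R^+$ selects one root from each pair $\{\pm\a\}$, but because of the sign twist on $\D_c$ it need not be the positive system of a Weyl chamber, and $J$ is integrable precisely when it is.

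The key step is the computation of the Chern-Ricci form $\r$, and the point I would establish is that, as an invariant closed $2$-form, $\r$ corresponds to $2\d_{R^+}:=\sum_{\a\in R^+}\a\in\gh^*$, \emph{independently of} $\x$. Indeed $\r$ is, up to a universal factor, the curvature of the connection induced by the Chern connection $D$ on the anticanonical bundle $\K_M^{-1}=\Lambda^{\mathrm{top}}T^{1,0}M\cong G\times_T\bC_{2\d_{R^+}}$; this induced connection is $G$-invariant since $D$ is canonically attached to the invariant pair $(g,J)$; and because $T$ has no non-zero fixed vector in $\gm$, the only invariant connection on an invariant line bundle over $G/T$ is the canonical one, whose curvature is the invariant $2$-form attached to the weight $2\d_{R^+}$. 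Consequently $\r=\l\o$ is equivalent to
\[
 \sum_{\a\in R^+}\a \;=\; \l\,\x\qquad\text{in }\gh^*,
\]
for some $\l\in\bR$, where $\x$ is required to be regular and admissible for $R^+$ (i.e. to satisfy the sign conditions above).

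It then remains to solve this equation, and I would split into cases. If $\l\neq0$, then $\x$ is a non-zero multiple of $\d_{R^+}$, so admissibility forces $\ep_\a\langle\a,\d_{R^+}\rangle$ (with $\ep_\a=\pm1$ the type sign of $\a$) to have one and the same sign for all $\a\in R^+$. When $R^+$ is an honest positive system one has $\langle\a,\d_{R^+}\rangle>0$ throughout $R^+$, so this constancy fails as soon as $\D_c\cap R^+\neq\emptyset$, i.e. as soon as $\frk$ is non-abelian; this leaves only $\gg=\gsl(2,\bR)$, for which $M$ is the hyperbolic plane and a direct computation gives $\l<0$. When $R^+$ is not a positive system, I expect one has to verify family by family that $\ep_\a\langle\a,\d_{R^+}\rangle$ still changes sign on $R^+$, so that no further solutions appear.

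If $\l=0$ the equation reads $\sum_{\a\in R^+}\a=0$, and then any regular admissible $\x$ produces a Chern-Einstein structure with $\l=0$. Viewing $R^+$ as an orientation of the edges of the complete graph on the index set, $\sum_{\a\in R^+}\a=0$ is a degree-balance condition (forcing an odd number of vertices), while admissibility forces the induced orientation on each ``compact block'' to be a total order, which bounds the block sizes. Together these should leave, among the classical families above, only $\gg=\su(p+1,p)$ — where an explicit ``rotational'' admissible $R^+$ realizes the balance — and exclude $\su(p,q)$ with $|p-q|\neq1$, $\gsp(n,\bR)$, $\gsp(p,q)$, $\so^*(2n)$ and $\so(p,q)$. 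The main obstacle I anticipate is this elimination carried out through all these families, with due care for long and short roots in type $C$ and for the various real forms of types $B,C,D$; the conceptual content, by contrast, is entirely in the identification of $\r$ with $2\d_{R^+}$.
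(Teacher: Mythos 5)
Your reduction of the problem is correct and agrees with the paper's: the Chern--Einstein condition becomes the linear equation $2\sum_{\a\in R^{10}}\a=\l\, B\circ z_o$ in $\gh^*$, subject to the admissibility constraints $\a(z_o)>0$ for $\a\in R^{10}_c$ and $\a(z_o)<0$ for $\a\in R^{10}_{nc}$. Your derivation of the Ricci form --- as the curvature of the induced connection on the anticanonical bundle $G\times_T\bC_{\chi}$, which is forced to be the canonical invariant connection because $\gm$ has no $T$-fixed vectors --- is a legitimate alternative to the paper's direct computation via the Nomizu operator of the Chern connection, where $\rho(X,Y)=-\Tr(J\Lambda_{[X,Y]})$ yields $\rho=i\,d\d$ with $\d=2\sum_{\a\in R^{10}}\a$; both routes give the same Koszul-type formula, and yours is arguably cleaner since it makes the independence of $\rho$ from the metric transparent.

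The genuine gap is that the classification itself is only asserted, not proved, and it is precisely where all the work lies. For $\l\neq 0$ your argument covers only the case where $R^{10}$ is an honest positive system (the integrable case); the generic admissible configurations are non-integrable, and for these you say you ``expect one has to verify family by family'' that $\ep_\a\langle\a,\d\rangle$ changes sign --- but this verification is the content of Sections 4.5--4.8 of the paper. There one writes $R_c^+(z)\cup R_{nc}^+(z)=w(R_o^+)$ for a Weyl element $w$ (a signed permutation in the classical types), computes the coefficients of $\d(z)$ explicitly in terms of counting functions such as $k_P(i)=|\{k\in P_\s:k>i\}|$ (Lemma \ref{L1} for type $A$), and derives monotonicity/consecutiveness constraints on $P_\s,Q_\s$ that force contradictions in every case except $\gsl(2,\bR)$ with $\l<0$. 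Similarly, for $\l=0$ your degree-balance picture is the right reformulation in type $A$ and does single out $\su(p+1,p)$ via the alternating arrangement $P_\s=\{1,3,\dots,n+1\}$, but the exclusions in types $B$, $C$, $D$ rest on concrete parity computations (e.g.\ in type $B$ all coefficients of $\tfrac12\d(z)$ are odd, so $\d(z)=0$ is impossible) that must actually be carried out, together with the separate treatment of the two choices of $\gk$ in types $C$ and $D$. As written, the proposal is a correct and well-organized plan whose decisive combinatorial step is missing.
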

While the full classification of the Chern-Einstein non compact homogeneous almost K\"ahler manifolds remains out of reach by this time, we can give some simple examples when the isotropy has one dimensional center (see section~\ref{Ex}). We also remark that the case when $G$ is semisimple can be easily deduced from the main result, as the manifold will split as an almost K\"ahler product of $G_i$- homogeneous manifolds where $G_i$ are the simple factors of $G$. \par
In section 2 we describe the homogeneous almost K\"ahler non-compact manifolds, which are acted on transitively by a semisimple Lie group. In section 3 we describe the Chern connection in the homogeneous setting and give a formula for the Ricci form $\rho$, which is analogue to the standard formula in the compact case (see e.g. \cite{AP}, \cite{BFR}). In the last section we describe the general strategy to prove the classification claimed in our main theorem and we prove it going through each simple Lie algebra of classical type. We also indicate how to produce homogeneous Chern-Einstein manifolds with non abelian compact isotropy with one dimensional center and we provide a full classification in this case for $G$ simple and classic.\par 
Shortly after this paper was posted, a new preprint of Della Vedova and Gatti (\cite{DVG}) appeared in the ArXiv, where the author prove interesting results 
concerning the construction of the almost complex structure (hence partially overlapping with section 2), the full classification of non compact Chern-Einstein manifolds when the group $G$ is simple of rank $\leq 4$ or exceptional as well as  some technique that might be useful in the project of a full classification for homogeneous non compact Chern-Einstein manifolds. These results can be seen as complementary to our main result Theorem \ref{MT}, which covers the generic case.  

\par \medskip

{\bf Notation.} Lie groups and their Lie algebras will be indicated by
capital and gothic letters respectively. We will denote the
Cartan-Killing form by $B$. If a Lie group $G$ acts on a manifold $M$, for every $X\in \gg$ we will denote by $X^*$ the corresponding vector field induced by the one-parameter subgroup $\exp(tX)$.
\section{Homogeneous almost K\"ahler non-compact manifolds of semisimple Lie groups}

Let $G$ be a non compact semisimple Lie group with Cartan decomposition
$$\gg = \gk + \gp,$$
where $\gk$ is the Lie algebra of a maximal compact subgroup $K$ and $\gp$ is an $\Ad(K)$-invariant complement with $[\gp,\gp]\subseteq \gk$.\par
We consider a homogeneous symplectic manifold of the form $G/L$ where $L\subseteq K$ is the centralizer $C_G(t_o)$ for some $t_o\in \gk$. Any $G$-homogeneous symplectic manifold with compact stabilizer is simply connected and it has this form (see e.g. \cite{BFR}). \par
The reductive decomposition of the manifold $M=G/L$ is given by
$$\gg = \gl + \gm = \gl + \gn + \gp, \quad \gk = \gl + \gn.$$
The manifold $G/L$ admits a $G$-equivariant fibration $G/L\to G/K$ over the non-compact symmetric space $S:= G/K$ with typical fibre given by the flag manifold $F:=K/L$. \par
Any invariant symplectic form $\o$ is defined by a closed non degenerate $Ad(L)$-invariant element $\o_\gm$ in $\Lambda^2(\gm^*)$. Any such form $\o_\gm$ can be written as $\o_\gm = d\eta$, where the 1-form $\eta= B\circ t_o$ for some element $t_o\in \gt$ so that $C_\gg(t_o) = \gl$. We fix an invariant symplectic form $\o$ which is associated to an element $t_o\in \gl$.\par
We denote by $\gt$ a Cartan subalgebra of $\gl$ and we set $\gh:= i\gt\subset \gg^c$ together with $z_o:=it_o\in\gh$.  The  complexification $\gg^{\small{\mathbb C}}$ has the root space decomposition
$$\gg^{\small{\mathbb C}} = \gh^{\small{\mathbb C}} \oplus\bigoplus_{\a\in R}\gg_\a,$$
where $R\subset \gh^*$ is the root system w.r.t. to the Cartan subalgebra $\gh^{\small{\mathbb C}}$.
The root system $R$ can be split as $R= R_{\gl}\cup R_\gm= R_\gl\cup ( R_c\cup R_{nc})$ where
$$\gl = \gh^{\small{\mathbb C}} \oplus\bigoplus_{\a\in R}\gg_\a,\
\gn^{\small{\mathbb C}}= \bigoplus_{\a\in R_c}\gg_\a,\ \gp^{\small{\mathbb C}}= \bigoplus_{\a\in R_{nc}}\gg_\a$$
The roots in $R_c$ are called {\it compact}, while roots in $R_{nc}$ are said to be {\it non-compact}. \par
We select a standard Chevalley basis for root spaces, namely a set of root vectors $\{E_\a\}_{\a\in R}$ so that $\gg_\a = \mathbb C\cdot E_\a$ for every $\a\in R$ and
$$B(E_\a,E_{-a}) = 1,\quad [E_\a,E_{-\a}] = H_\a\in \gh, $$
where $H_\a$ denotes the coroot in $\gh$ so that $B(H_\a,H)=\a(H)$ for every $H\in \gh$. If we now consider the antilinear involution $\sigma$ of $\gg^{\small{\mathbb C}}$ corresponding to the real form $\gg$ (for brevity we will write the conjugation $\tau$ as\ $\bar{}$\, ), then
$$\overline{E_{\a}}= - E_{-\a},\qquad \a\in R_c$$
$$\overline{E_{\a}}=  E_{-\a},\qquad \a\in R_{nc}.$$
We consider the vectors in $\gg$ defined as follows
$$v_\a := E_\a + \bar E_\a = \left\{\begin{matrix} E_\a - E_{-\a}, & \a\in R_c\\
E_\a + E_{-\a}, & \a\in R_{nc}\end{matrix}   \right.$$
$$w_\a := i(E_\a - \bar E_\a) = \left\{\begin{matrix} i(E_\a + E_{-\a}), & \a\in R_c\\
i(E_\a - E_{-\a}), & \a\in R_{nc}.\end{matrix}   \right.$$
Then
$$\gn = \bigoplus_{\a\in R_{c}} \mbox{Span}_{\mathbb R}\{v_\a,w_\a\},\qquad
\gp = \bigoplus_{\b\in R_{nc}} \mbox{Span}_{\mathbb R}\{v_\b,w_\b\}. $$
We now consider $G$-invariant almost complex structures $J$ on $G/L$, i.e. $\Ad(L)$-invariant endomorphisms $J\in \mbox{End}(\gm)$ with $J^2=-Id$, or equivalently $\ad(\gl^{\small{\mathbb C}})$-invariant endomorphisms $J\in \mbox{End}(\gm^{\small{\mathbb C}})$ with $J^2=-Id$ and commuting with the conjugation $\sigma$. \par
Since $\gl^{\small{\mathbb C}}$ contains a Cartan subalgebra, the $\ad(\gl)$-invariance of $J$ implies that $J$ preserves every root space and therefore for every $\a\in R_\gm$ we have
$$JE_\a = i \epsilon_\a \cdot E_\a,\qquad \epsilon_\a = \pm 1.$$
Therefore we can decompose
$$R_c = R_c^{10} \cup R_c^{01}, \qquad R_{nc} = R_{nc}^{10} \cup R_{nc}^{01},$$
where
$$R_{c/nc}^{10} =\{\a\in R_{c/nc}|\ \epsilon_{\a} = 1\},\ R_{c/nc}^{01} =\{\a\in R_{c/nc}|\ \epsilon_{\a} = - 1\}.$$
 Since $J$ commutes with $\sigma$, we have $\epsilon_{-\a} = - \epsilon_{\a}$ for every $\a\in R_\gm$, hence
$$R_c^{01} = - R_c^{10},\qquad R_{nc}^{01} = - R_{nc}^{10}.$$
The $\ad(\gl^{\small{\mathbb C}})$-invariance of $J$ means that
\beq\label{inv} (R_\gl + R_c^{10})\cap R \subseteq R_c^{10},\ (R_\gl + R_{nc}^{10})\cap R \subseteq R_{nc}^{10}.\eeq

We now consider the invariant pseudo-Riemannian metric $g$ which is defined by the symmetric form
$$g(u,v) = \o(u,Jv),\ u,v\in \gm.$$
\begin{lemma} The pseudo-Riemannian metric $g$ defined above is $J$-Hermitian and it is positive definite if and only if the following conditions are satisfied
	$$\a(z_o)>0\quad \mbox{{for}}\ \a\in R_c^{10},\qquad
	\a(z_o)<0\quad \mbox{{for}}\ \a\in R_{nc}^{10}. $$
	
\end{lemma}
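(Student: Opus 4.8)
The plan is to compute $g$ explicitly on the basis vectors $v_\a, w_\a$ (for $\a$ ranging over positive roots in $R_\gm$, say one representative from each pair $\{\a,-\a\}$) and read off when the resulting quadratic form is positive definite. First I would record that since $\o = d\eta$ with $\eta = B\circ t_o$, the invariant form $\o_\gm$ is given by the standard formula $\o_\gm(X,Y) = -\eta([X,Y]) = -B(t_o,[X,Y]) = B([t_o,X],Y)$ for $X,Y\in\gm$; equivalently, working in $\gg^{\mathbb C}$, $\o(X,Y) = B(z_o\,\lrcorner, [\,\cdot\,,\cdot\,])$ up to the factor $i$ coming from $z_o = it_o$. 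Evaluating on root vectors, $[z_o,E_\a] = \a(z_o)E_\a$, so $\o(E_\a,E_{-\a}) = -B(z_o,[E_\a,E_{-\a}]) = -B(z_o,H_\a) = -\a(z_o)$, and $\o$ pairs $\gg_\a$ only with $\gg_{-\a}$.

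Next I would combine this with the complex structure: $JE_{-\a} = i\epsilon_{-\a}E_{-\a} = -i\epsilon_\a E_{-\a}$, so $g(E_\a, E_{-\a}) = \o(E_\a, JE_{-\a}) = -i\epsilon_\a\,\o(E_\a,E_{-\a}) = i\epsilon_\a\,\a(z_o)$. From here it is routine to pass to the real basis: for $\a\in R_c$ one has $v_\a = E_\a - E_{-\a}$, $w_\a = i(E_\a+E_{-\a})$, and a short bilinear computation using $\o(E_\a,E_\a)=\o(E_{-\a},E_{-\a})=0$, $\o(E_\a,E_{-\a}) = -\a(z_o) = -\o(E_{-\a},E_\a)$ gives $g(v_\a,v_\a) = g(w_\a,w_\a) = 2\epsilon_\a\,\a(z_o)$ and $g(v_\a,w_\a)=0$ (the factor and sign to be pinned down by careful bookkeeping of the $i$'s); similarly for $\a\in R_{nc}$, where $v_\b = E_\b+E_{-\b}$ and $w_\b = i(E_\b - E_{-\b})$, the same computation yields $g(v_\b,v_\b)=g(w_\b,w_\b) = -2\epsilon_\b\,\b(z_o)$, with the opposite sign arising precisely from the opposite reality condition $\overline{E_\b} = E_{-\b}$. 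I would also note that $g$ pairs the plane $\Span\{v_\a,w_\a\}$ trivially with $\Span\{v_\g,w_\g\}$ for $\a\neq\pm\g$, since $\o$ does, so $g$ is block-diagonal over the positive roots and positive definiteness is equivalent to positivity on each $2$-plane separately.

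Hermiticity $g(JX,JY) = g(X,Y)$ is then immediate since $J$ acts on each root space as multiplication by $i\epsilon_\a$ with $\epsilon_\a^2 = 1$, and $\o$ is $(1,1)$ with respect to $J$ by construction of the decomposition; alternatively one checks $g(v_\a,v_\a) = g(w_\a,w_\a)$ and $g(v_\a,w_\a)=0$ directly, which is exactly what the block computation above gives, and which says each $2$-plane is a complex line on which $g$ is a real multiple of the standard form. Restricting to the chosen positive roots, the condition that all the diagonal entries $2\epsilon_\a\,\a(z_o)$ (for $\a\in R_c^{10}$ with $\epsilon_\a = +1$) and $-2\epsilon_\b\,\b(z_o)$ (for $\b\in R_{nc}^{10}$ with $\epsilon_\b = +1$) be strictly positive is precisely $\a(z_o)>0$ for $\a\in R_c^{10}$ and $\b(z_o)<0$ for $\b\in R_{nc}^{10}$, which is the claimed statement.

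The main obstacle I anticipate is purely bookkeeping: tracking the factors of $i$ and the signs through the passage between the Chevalley basis $\{E_\a\}$, the conjugation $\sigma$ (with its case distinction between $R_c$ and $R_{nc}$), and the real basis $\{v_\a,w_\a\}$, and making sure the sign of $\o_\gm$ relative to $d\eta = d(B\circ t_o)$ is fixed consistently with whatever convention was implicit when $t_o$ was chosen inside $\gl$. There is no conceptual difficulty — everything reduces to a $2\times 2$ computation on each root plane — but a careless sign would flip the inequalities, so I would fix all conventions at the outset (orientation of $d$, the identification $z_o = it_o$, and the normalization $B(E_\a,E_{-\a})=1$) and carry them through uniformly.
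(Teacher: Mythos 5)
Your proposal is correct and follows essentially the same route as the paper: reduce to the $2$-planes $\Span\{v_\a,w_\a\}$, compute the diagonal values ($2\a(z_o)$ on compact roots, $-2\b(z_o)$ on non-compact ones, with the sign flip coming from the reality condition $\overline{E_\b}=E_{-\b}$), and get Hermiticity from $J$ acting as $i\epsilon_\a$ on each root space. The only blemish is the intermediate value $\o(E_\a,E_{-\a})=-\a(z_o)$, which with the paper's conventions should be $-i\,\a(z_o)$ (so that $g(E_\a,E_{-\a})$ comes out real); you flag this factor-of-$i$ issue yourself and your final diagonal entries agree with the paper's.
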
	
\begin{proof} In order to prove that $g$ is $J$-Hermitian, we note that $\o(E_\a,E_\b)\neq 0$ if and only if $\a+\b=0$ and in this case we have
	$$\o(JE_\a,JE_{-\a}) = - \epsilon_{-\a}\cdot\epsilon_{\a}\cdot\o(E_\a,E_{-\a}) =
	\o(E_\a,E_{-\a}).$$
Clearly $g(E_\a,E_\b)= 0$ if $\a+\b\neq 0$ and $g$ is positive definite if and only if $g(v_\a,v_\a) >0$ for every $\a\in R_\gm$.
Now if $\a\in R_c^{10}$ we have
$$0<g(v_\a,v_\a)= \o(E_\a-E_{-\a},J(E_\a-E_{-\a})) = i\ \o(E_\a-E_{-\a},E_\a+E_{-\a}) =
2i\ \o(E_\a,E_{-\a}) = $$
$$ = 2i\ B([t_o,E_\a],E_{-\a}) = 2\a(z_o),$$
while if $\a\in R_{nc}^{10}$ we have
$$0<g(v_\a,v_\a)= \o(E_\a+E_{-\a},J(E_\a+E_{-\a})) = i\ \o(E_\a+E_{-\a},E_\a-E_{-\a}) =
-2i\ \o(E_\a,E_{-\a}) = $$
$$ = -2i\ B([t_o,E_\a],E_{-\a}) = -2\a(z_o).$$
\end{proof}
We summarize the above arguments in the following Theorem.
\begin{theorem}\label{Thm1} Let $G$ be a semisimple non-compact Lie group, $L$ a compact subgroup of $G$ given by the centralizer in $G$ of some element $t_o\in \gg$. Let $\omega = \omega_{t_o}$ be the invariant symplectic form associated to $t_o$. Then there exists a unique extension of the homogeneous symplectic manifold $(M,\o)$ to a homogeneous almost K\"ahler manifold $(M,\o,J_{t_o})$ where the invariant almost complex structure $J_{t_o}$ is defined by the holomorphic space $\gm^{10}$
	$$\gm^{10} = \gg(R^{10}),\quad R^{10} = R_c^{10}\cup R_{nc}^{10},$$
	$$R_c^{10} = \{\a\in R_c|\ \a(z_o)>0\},\quad R_{nc}^{10} = \{\a\in R_{nc}|\ \a(z_o)<0\}.$$
The almost complex structure $J_{t_o}$ is integrable, hence $(M,\o,J_{t_o})$ is K\"ahler if and only if the symmetric space $G/K$ is Hermitian.
	\end{theorem}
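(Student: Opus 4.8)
The plan for the first assertion is to assemble what precedes. Any $G$-invariant almost complex structure $J$ compatible with $\o$ is $\ad(\gl)$-invariant, so, as $\gl^{\mathbb{C}}\supseteq\gh^{\mathbb{C}}$, it acts on each root space by $JE_\a=i\epsilon_\a E_\a$ with $\epsilon_\a=\pm1$; commuting with the conjugation forces $\epsilon_{-\a}=-\epsilon_\a$, and the $\ad(\gl^{\mathbb{C}})$-invariance of $J$ is exactly condition \eqref{inv}. By the Lemma the metric $g=\o(\cdot,J\cdot)$ is automatically $J$-Hermitian, and it is positive definite precisely when $\epsilon_\a=1$ for $\a\in R_c$ with $\a(z_o)>0$ and for $\a\in R_{nc}$ with $\a(z_o)<0$. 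Since $\gl=C_\gg(t_o)$, no root of $R_\gm$ annihilates $z_o$, so these conditions determine every $\epsilon_\a$, which gives uniqueness; for existence one need only check that the prescription $R_c^{10}=\{\a\in R_c\st\a(z_o)>0\}$, $R_{nc}^{10}=\{\a\in R_{nc}\st\a(z_o)<0\}$ defines an $\ad(\gl^{\mathbb{C}})$-invariant complex structure. Here $\epsilon_{-\a}=-\epsilon_\a$ is clear, and \eqref{inv} holds because any root of $R_\gl$ annihilates $z_o$ while $[\gk,\gk]\subseteq\gk$ and $[\gk,\gp]\subseteq\gp$ guarantee that adding a root of $R_\gl$ to a root of $R_c$ (resp.\ $R_{nc}$) keeps it in $R_c$ (resp.\ $R_{nc}$) and preserves the sign of its value at $z_o$. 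As $\o=d\eta$ is closed, $(M,\o,J_{t_o})$ is then almost K\"ahler.

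For integrability I would use that, $J_{t_o}$ being $G$-invariant, it is integrable iff $\gq:=\gl^{\mathbb{C}}\oplus\gm^{10}$ is a complex subalgebra of $\gg^{\mathbb{C}}$ (equivalently, $[\gm^{10},\gm^{10}]\subseteq\gl^{\mathbb{C}}\oplus\gm^{10}$); and since $R^+:=R_\gl\cup R^{10}$ satisfies $R^+\cup(-R^+)=R$ and $R^+\cap(-R^+)=R_\gl$, such a $\gq$, when it is a subalgebra, is automatically parabolic with Levi factor $\gl^{\mathbb{C}}$. The point is to decide when $R^+$ is closed under addition, which I would do by running over sums of pairs of its roots: $R_\gl+R_\gl$ stays in $R_\gl$; $R_\gl+R^{10}$ stays in $R^{10}$ by \eqref{inv}; a sum of two roots of $R_c^{10}$, when a root, is compact with positive value at $z_o$, hence in $R_c^{10}$; a sum of a root of $R_c^{10}$ and one of $R_{nc}^{10}$, when a root, is non-compact since $[\gk,\gp]\subseteq\gp$; and a sum of two roots of $R_{nc}^{10}$, when a root, is compact since $[\gp,\gp]\subseteq\gk$ and has negative value at $z_o$, hence lies in $R_c^{01}\not\subseteq R^+$. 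So $\gq$ is a subalgebra iff (A) $\gg(R_{nc}^{10})$ is abelian, and (B) $(\a+\b)(z_o)<0$ whenever $\a\in R_c^{10}$, $\b\in R_{nc}^{10}$ and $\a+\b\in R$.

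It then remains to match (A)--(B) with the Hermitian condition. Granting (A)--(B), the subspace $\gg(R_{nc}^{10})$ of $\gp^{\mathbb{C}}$ is stable under $\gh^{\mathbb{C}}$, under $\gg(R_\gl)$ (by \eqref{inv}), under $\gg(R_c^{10})$ (by (B)), and under $\gg(R_c^{01})$ (since a sum of a root of $R_c^{01}$ and one of $R_{nc}^{10}$, when a root, is non-compact with negative value at $z_o$); thus it is an $\ad(\gk^{\mathbb{C}})$-submodule of $\gp^{\mathbb{C}}$, proper and non-zero because $R_{nc}^{10}$ and $R_{nc}^{01}=-R_{nc}^{10}$ partition $R_{nc}\neq\emptyset$. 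Hence $\gp^{\mathbb{C}}$ is a reducible $\ad(\gk^{\mathbb{C}})$-module, which for $G/K$ irreducible forces $\gk$ to have one-dimensional centre, i.e.\ $G/K$ Hermitian (for $G$ semisimple one argues factor by factor). Conversely, when $G/K$ is Hermitian the Harish--Chandra decomposition $\gp^{\mathbb{C}}=\gp^+\oplus\gp^-$ into abelian $\ad(\gk^{\mathbb{C}})$-modules gives $R_{nc}=R_{nc}^+\sqcup R_{nc}^-$, and identifying $R_{nc}^{10}$ with $R_{nc}^+$ or $R_{nc}^-$ turns (A) into the abelianness of $\gp^\pm$ and (B) into the invariance $[\gk^{\mathbb{C}},\gp^\pm]\subseteq\gp^\pm$. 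Finally an integrable $J_{t_o}$ together with the closed compatible form $\o$ yields a K\"ahler structure. I expect the hardest step to be this last identification --- matching $R_{nc}^{10}$ with a Harish--Chandra piece, and, in the converse, keeping track of how the flag-manifold roots $R_c$ interact with the symmetric-space roots $R_{nc}$ in the closure conditions (A)--(B) --- which is precisely where the structure of $t_o$ and of the isotropy has to be exploited.
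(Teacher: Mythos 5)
Your handling of the first assertion is correct and is essentially the paper's own argument: the authors obtain existence and uniqueness by exactly the same diagonalization of $J$ on the root spaces together with the positivity Lemma, and then ``summarize the above arguments'' in the statement; you are in fact slightly more careful than the text in verifying that the prescribed signs satisfy the invariance condition \eqref{inv} (using that roots of $R_\gl$ annihilate $z_o$). Likewise, your reduction of integrability to closure of $R_\gl\cup R^{10}$, the case analysis yielding conditions (A)--(B), and the deduction that an integrable $J_{t_o}$ makes $\gg(R_{nc}^{10})$ a proper nonzero $\ad(\gk^{\mathbb C})$-submodule of $\gp^{\mathbb C}$, forcing $G/K$ Hermitian, is a correct and more complete version of the paper's two-line justification (which only records that $(R_c^{01}+R_{nc}^{10})\cap R\subset R_{nc}^{10}$ is automatic and that integrability amounts to $K$-invariance of $J_{t_o}|_{\gp}$).

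The gap is exactly at the step you flagged: in the converse you cannot, for a general admissible $t_o$, ``identify $R_{nc}^{10}$ with $R_{nc}^{+}$ or $R_{nc}^{-}$''. The set $\{\a\in R_{nc}\ |\ \a(z_o)<0\}$ need not be a Harish--Chandra piece even when $G/K$ is Hermitian, and then your condition (A) fails. Concretely, take $\gg=\su(2,1)$, $\gl=\gt$ a maximal torus, and $z_o=\diag(2,-3,1)$: here $G/K$ is the complex hyperbolic plane, hence Hermitian, but $R_{nc}^{10}=\{\e_3-\e_1,\ \e_2-\e_3\}$ contains two roots whose sum $\e_2-\e_1$ is a (compact) root lying in $R_c^{01}$, so $\gg(R_{nc}^{10})$ is not abelian and $J_{t_o}$ is not integrable. (This is consistent with --- indeed it is --- the non-integrable Chern--Ricci-flat structure on $\SU(p+1,p)/T$ produced by the paper's main theorem for $p=1$.) So the ``if'' direction of the last assertion, read literally for arbitrary admissible $t_o$, cannot be proved because it is false; what your conditions (A)--(B) actually establish --- $J_{t_o}$ is integrable if and only if $J_{t_o}|_{\gp}$ is $K$-invariant, equivalently $R_{nc}^{10}$ is one of the two Harish--Chandra sets of an Hermitian $G/K$ --- is the sharp statement, and it is also all that the paper's own one-line argument proves. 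To recover the equivalence with ``$G/K$ Hermitian'' one must add the hypothesis that $\a(z_o)$ has constant sign on $R_{nc}^{+}$ (as happens, for instance, when $L=K$).
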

The last assertion follows from the following observations. Indeed, $(R_c^{01}+R_{nc}^{10}) \cap R \subset R_{nc}^{10}$ and $K$-invariance of $J|_{\gp}$ is equivalent to the integrability condition $(R_c^{10}+ R_{nc}^{10})\cap R \subset R_{nc}^{10}$.

\begin{remark}\label{Decompodition of G} (1)\ Note that the restriction of the almost complex structure $J_{t_o}$ to the (complex) fibre $F$ is integrable and $(\o|_F,J|_F)$ is a K\"ahler structure. \par
	(2)\ If we decompose $G = G_1\cdot G_2\cdot \ldots\cdot G_k$ as the product of its simple factors, then the homogenous almost K\"ahler space $G/L$ splits accordingly as $\Pi_{i=1}^kG_i/L_i$ where $L_i = L\cap G_i$ and each factor is a homogeneous almost K\"ahler space. Therefore we can always assume that $G$ is simple.
\end{remark}
     Let    $M =  G/L  \to  G/K$   be  the $G$-equivariant   fibering    of  an  almost   K\"ahler homogeneous  manifold  $(M = G/L, \omega, J)$   over  the    symmetric  space $G/K$.	
 We can  construct an integrable complex structure   $\tilde J$ which coincides with $J$ along the fibre $F$ and is the opposite to $J$ on the orthogonal space.  We  call   $\tilde J$  the   complex  structure  associated  to the   almost   K\"ahler homogeneous  manifold  $(M = G/L, \omega, J)$.
  Note  that  $(\o,\tilde J_{t_o})$  is  an invariant pseudo-K\"ahler structure  on  $M = G/L$. \par

\medskip
\section{The Chern-Einstein equation for almost K\"ahler homogeneous manifolds}

Now we describe the expression for the Chern connection $D$ on the homogeneous almost K\"ahler manifold $G/L$ with invariant symplectic structure $\o_{z_o}$ and corresponding invariant almost complex structure $J$ (see also \cite{P}). \par
It is well known that the Chern connection is the unique connection $D$ that leaves $g$ and $J$ parallel and whose torsion $T$ satisfies the property
$$T(JX,Y) = T(X,JY) = JT(X,Y).$$
We are mainly interested in the (first) Ricci form $\rho$ which is a defined as
$$\rho(X,Y) = \mbox{{Tr}}\ J\circ R_{XY},$$
where $R$ denotes the curvature tensor. It is known that the Ricci form $\rho$ is a closed $2$-form whose cohomology class $[\rho]$ represents $2\pi c_1(M,\omega)$ (see e.g.~ \cite{AD},~\S 7).\par
We recall that any invariant connection $D$ on the homogeneous space $G/L$ with reductive decomposition $\gg = \gl + \gm$ can be described by the $\ad(\gl)$-equivariant Nomizu's map $\Lambda:\gg\to \mbox{{End}}(\gm)$ satisfying the condition
$$\Lambda_X = \ad_X|_\gm,\qquad X\in \gl.$$
Under the identification $\gm\cong T_oG/L$ we have
$$\Lambda_XY = (D_XY^* - [X^*,Y^*])|_o,$$
where $X^*,Y^*$ denote the vector fields on $M$ corresponding to $X,Y\in\gm$. Then in terms of Nomizu's operator, the torsion $T$ and the curvature $R$ at $o\in M$  are given by
$$T(X,Y) = \Lambda_XY-\Lambda_YX - [X,Y]_\gm,$$
$$R_{XY} = [\Lambda_X,\Lambda_Y] - \Lambda_{[X,Y]}.$$
If $D$ is the Chern connection on the homogeneous space  $G/L$ we can compute its Ricci form in terms of the root space decomposition.
\begin{proposition} For every root $\a,\b\in R_\gm$ we have
	$$\rho(E_\a,E_\b) = 0 \quad \mbox{{if}}\ \a+\b\neq 0,$$
	$$\rho(E_\a,E_{-\a}) = -2i\ \sum_{\beta\in R_\gm^{10}} \langle\a,\b\rangle.$$
\end{proposition}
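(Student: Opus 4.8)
The plan rests on two structural facts. First, $\rho$ is a $G$-invariant $2$-form, so at $o$ it is an $\ad(\gl)$-invariant element of $\Lambda^2\gm^{*}$, hence $\ad(\gt)$-invariant. Second, the Chern connection satisfies $DJ=0$, which in Nomizu's language says precisely that $\Lambda_X$ commutes with $J$ for \emph{every} $X\in\gg$: for $X\in\gl$ this is the $\ad(\gl)$-invariance of $J$, and for $X\in\gm$ it is part of the defining property of $D$. In fact this is the only property of $D$ beyond invariance that we use, so $Dg=0$ and the torsion identity play no role here. I compute the trace over $\gm^{\C}=\bigoplus_{\g\in R_\gm}\gg_\g$; on $\gg_\g=\C\,E_\g$ the map $J$ acts as the scalar $i\epsilon_\g$, and for $H\in\gh^{\C}$ the map $\ad_H$ acts as $\g(H)$, so that $\ad_{H_\a}$ acts on $\gg_\g$ by $\g(H_\a)=\langle\g,\a\rangle$. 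Recall also that $R_\gm=R_\gm^{10}\cup(-R_\gm^{10})$, with $\epsilon_\g=1$ for $\g\in R_\gm^{10}$ and $\epsilon_{-\g}=-\epsilon_\g$.

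For $\a+\b\neq0$ I would argue by invariance only: the $\ad(\gt)$-invariance of $\rho$ gives $\rho([H,E_\a],E_\b)+\rho(E_\a,[H,E_\b])=0$, that is $\big(\a(H)+\b(H)\big)\rho(E_\a,E_\b)=0$, for all $H\in\gt$. Since $\gh^{\C}=\gt^{\C}$ is a Cartan subalgebra of $\gg^{\C}$, the functional $\a+\b$ vanishes identically on $\gt$ only when $\b=-\a$; hence $\rho(E_\a,E_\b)=0$. (Alternatively, $\ad(\gt)$-equivariance of $\Lambda$ forces $\Lambda_{E_\a}(\gg_\g)\subseteq\gg_{\g+\a}$, so $R_{E_\a E_\b}$ shifts $\gh$-weights by $\a+\b\neq0$ and thus has vanishing diagonal.)

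For $\b=-\a$, use $[E_\a,E_{-\a}]=H_\a\in\gh^{\C}\subset\gl^{\C}$ and $\Lambda_Z=\ad_Z|_\gm$ for $Z\in\gl$ to write $R_{E_\a E_{-\a}}=[\Lambda_{E_\a},\Lambda_{E_{-\a}}]-\ad_{H_\a}|_{\gm^{\C}}$, so that
$$\rho(E_\a,E_{-\a})=\Tr\!\big(J\,[\Lambda_{E_\a},\Lambda_{E_{-\a}}]\big)-\Tr\!\big(J\circ\ad_{H_\a}|_{\gm^{\C}}\big).$$
The first trace vanishes: since $J$ commutes with $\Lambda_{E_\a}$ and with $\Lambda_{E_{-\a}}$, cyclicity of the trace gives $\Tr(J\Lambda_{E_\a}\Lambda_{E_{-\a}})=\Tr(\Lambda_{E_{-\a}}J\Lambda_{E_\a})=\Tr(J\Lambda_{E_{-\a}}\Lambda_{E_\a})$. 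For the second trace, evaluating on each line $\gg_\g$ gives $\sum_{\g\in R_\gm}i\,\epsilon_\g\langle\g,\a\rangle$; grouping $\g$ with $-\g$ and using $\epsilon_{-\g}=-\epsilon_\g$ and $\langle-\g,\a\rangle=-\langle\g,\a\rangle$ collapses this to $2i\sum_{\g\in R_\gm^{10}}\langle\g,\a\rangle$. Combining the two pieces yields $\rho(E_\a,E_{-\a})=-2i\sum_{\g\in R_\gm^{10}}\langle\a,\g\rangle$.

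The whole computation is short, and the only delicate point is the vanishing of the commutator trace: one is tempted to write out $\Lambda_{E_\a}$ explicitly (which is possible from the Chern axioms), but the cleaner observation is that $\Tr\big(J[\Lambda_{E_\a},\Lambda_{E_{-\a}}]\big)$ disappears for purely formal reasons as soon as $D$ is almost complex, so that the entire answer is carried by the diagonal operator $\ad_{H_\a}|_{\gm^{\C}}$.
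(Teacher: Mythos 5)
Your argument is correct and follows essentially the same route as the paper's: the vanishing for $\a+\b\neq 0$ via $\ad(\gt)$-invariance, the disappearance of $\Tr\bigl(J[\Lambda_{E_\a},\Lambda_{E_{-\a}}]\bigr)$ because $J$ commutes with every $\Lambda_X$, and the reduction to $-\Tr\bigl(J\ad(H_\a)|_{\gm^{\C}}\bigr)$ computed root space by root space. The only cosmetic difference is that the paper first derives $\rho(X,Y)=-\Tr(J\Lambda_{[X,Y]})$ for arbitrary $X,Y\in\gm$ before specializing to root vectors, whereas you specialize immediately.
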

\begin{proof} Using the expression for the curvature $R$ and the fact that $\Lambda$ commutes with $J$, we see that for $X,Y\in\gm$
$$\rho(X,Y) = Tr (J\Lambda_X\Lambda_Y - J\Lambda_Y\Lambda_X) - Tr(J\Lambda_{[X,Y]}) = $$	
$$= Tr ([\Lambda_X,J\Lambda_Y]) - Tr(J\Lambda_{[X,Y]}) =
- Tr(J\Lambda_{[X,Y]}).$$
For $\a,\b\in R_\gm$ and $H\in\gt$ the $\ad(\gl)$-invariance implies that
$$0 = \rho([H,E_\a],E_\b) + \rho(E_\a,[H,E_\b]) = (\a+\b)(H)\cdot \rho(E_\a,E_\b),$$
so that $\rho(E_\a,E_\b)=0$ unless $\a+\b=0$.\par
Now for $\a\in R_\gm$
$$\rho(E_\a,E_{-\a}) = -Tr|_{\gm^C}J\ad(H_\a) = -2i\sum_{\b\in
R_\gm^{10}} \langle \a,\b\rangle.$$
\end{proof}
We introduce the Koszul's form
$$\delta := 2\sum_{\gamma\in R_\gm^{10}} \gamma \in \gh^*$$
so that we have the following corollary
\begin{corollary} The Ricci form $\rho$ is given by
$$\rho = i\ d\delta,$$
where for every $X,Y\in \gm$
$$d\d(X,Y) = -\d([X,Y]_{\gt}).$$
The  Ricci  form  $\rho$ does  not  depend  on  the metric,  but  only on  the   almost   complex structure  $J$

\end{corollary}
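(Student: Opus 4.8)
The plan is to evaluate both sides of $\rho = i\,d\delta$ on the Chevalley basis of $\gm^{\small{\mathbb C}}$ and match them against the Proposition just proved. Regard the Koszul form $\delta = 2\sum_{\gamma\in R_\gm^{10}}\gamma$ as a linear functional on $\gg^{\small{\mathbb C}}$ by extending it by zero on every root space, so that it is supported on the Cartan subalgebra $\gh^{\small{\mathbb C}}$; as an invariant $1$-form on $G/L$ its exterior differential is the Chevalley-Eilenberg $2$-form $d\delta(X,Y)=-\delta([X,Y])$ for $X,Y\in\gm$, and since $\delta$ annihilates all root spaces only the Cartan component $[X,Y]_\gt$ of the bracket survives, which is exactly the asserted formula $d\delta(X,Y)=-\delta([X,Y]_\gt)$. (That this form is horizontal and $\Ad(L)$-invariant, hence descends to $G/L$, is automatic: for $X\in\gl$ one has $[X,Y]\in\gm$, so no Cartan component, and invariance follows from the same argument with $H\in\gt$ used in the proof of the Proposition; alternatively it follows a posteriori from $d\delta=-i\rho$.) Now for $\a,\b\in R_\gm$ with $\a+\b\ne 0$ the bracket $[E_\a,E_\b]$ lies in $\gg_{\a+\b}$, which is a root space or zero and in any case has no $\gh^{\small{\mathbb C}}$-component, so $d\delta(E_\a,E_\b)=0$; and using $[E_\a,E_{-\a}]=H_\a$ together with $\gamma(H_\a)=B(H_\gamma,H_\a)=\langle\a,\gamma\rangle$ one gets
$$d\delta(E_\a,E_{-\a})=-\delta(H_\a)=-2\sum_{\gamma\in R_\gm^{10}}\gamma(H_\a)=-2\sum_{\gamma\in R_\gm^{10}}\langle\a,\gamma\rangle.$$

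Multiplying by $i$ and comparing with the Proposition: $i\,d\delta(E_\a,E_\b)=0=\rho(E_\a,E_\b)$ when $\a+\b\ne0$, and $i\,d\delta(E_\a,E_{-\a})=-2i\sum_{\gamma\in R_\gm^{10}}\langle\a,\gamma\rangle=\rho(E_\a,E_{-\a})$. Since $\{E_\a\}_{\a\in R_\gm}$ is a $\mathbb C$-basis of $\gm^{\small{\mathbb C}}$ and both $\rho$ and $i\,d\delta$ are $\mathbb C$-bilinear alternating forms, they agree on $\gm^{\small{\mathbb C}}$, hence on $\gm$, which proves $\rho=i\,d\delta$.

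For the last assertion it is enough to observe that the right-hand side $i\,d\delta$ depends only on $\delta=2\sum_{\gamma\in R_\gm^{10}}\gamma$, and $R_\gm^{10}$ --- equivalently the holomorphic space $\gm^{10}$ --- is precisely the datum of the almost complex structure $J$: neither the symplectic form $\o$ nor the metric $g=\o(\cdot,J\cdot)$ enters the formula. Thus for a fixed invariant $J$ the Chern-Ricci form $\rho$ is the same for every compatible invariant symplectic form, equivalently every compatible invariant metric. I do not expect a real obstacle in any of this; the only points needing a little care are the bookkeeping of which bracket components survive and the reality conventions --- roots are real on $\gh$ and purely imaginary on $\gt$, so $d\delta$ is imaginary-valued on real tangent vectors and the factor $i$ restores the reality of $\rho$ as it must.
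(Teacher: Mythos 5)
Your proposal is correct and follows exactly the route the paper intends: the corollary is stated as an immediate consequence of the preceding Proposition, and your verification on the Chevalley basis (computing $d\delta(E_\a,E_{-\a})=-\delta(H_\a)=-2\sum_{\gamma\in R_\gm^{10}}\langle\a,\gamma\rangle$ and matching after multiplication by $i$) together with the observation that $\delta$ depends only on $R_\gm^{10}$, hence only on $J$, is precisely the argument left implicit there. Your remarks on the surviving bracket components and the reality conventions are careful bookkeeping that the paper omits but does not contradict.
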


Note that the map $d:\gt^*\to \Lambda^2(\gm^*)$ is injective and $\rho$ is $\Ad(L)$-invariant, so that  $\d$ belongs to the center $\gz$ of $\gl$. \par
\begin{definition} An almost K\"ahler manifold $(M,\o,J)$ is called Chern-Einstein if its Ricci form $\rho$ satisfies
	$$\rho = \lambda \omega$$
	for some constant $\l\in \mathbb R$.
\end{definition}
\begin{remark} We remark that when $L$ is maximal compact in $G$, i.e. the homogeneous space $G/L$ is Hermitian symmetric, the center of $L$ is one-dimensional and there exists only one invariant symplectic structure up to a multiple. It follows that the corresponding invariant almost complex structure is integrable and the manifold is K\"ahler. In this case the Chern connection coincides with the Levi Civita connection and the manifold is K\"ahler-Einstein.

More generally, it is known that any  non-compact homogeneous K\"ahler manifold $G/L$ with  non compact simple $G$  is K\"ahler  if  a   maximal  compact  subgroup   $K \supset L$ of  $G$  has  1-dimensional  center.   Then
 $G/L \to  G/K$ is  a  $G$-equivariant  fibering  over   the  Hermitian  symmetric  space  $G/K$.  Moreover,  $G/K$  is  the only   K\"ahler-Einstein homogeneous  manifold   of the  group  $G$,
         see e.g. \cite{BFR}. \end{remark}

    Let $\tilde J$  be  the  integrable  complex   structure,    associated    with    $(M=G/L, \o,  J)$.
 We  set 
 $$ \tilde{\delta}= 2\sum_{\alpha \in R_{\mathfrak{m}} ,  \alpha(z_0) >0}\alpha.$$
     Then
    $\tilde{\rho} =  i  d \tilde{\delta} $  defines   an invariant non-degenerate  representative $\tilde{\rho}$   of   the Chern  class $c_1(\tilde J)$.  Hence   for    any  $\lambda \neq 0$,   $\tilde{\o}_{\l}  := -(\l)^{-1}\tilde{\rho}$  defines an invariant   pseudo-K\"ahler  structure   $(\tilde{\o}_{\l}, \tilde{J})$
  on  $M =  G/L$   which  satisfies  the Einstein  equation    $\tilde{\rho} = \l \tilde{\o}_{\l} $.
   
\begin{proposition}	
	Let  $(M=G/L,  \o, J)$ be  a  homogeneous   almost K\"ahler  manifold  of  a  semisimple  Lie  group  $G$. Then   $(\tilde{\o}_{\l}, \tilde J)$    is   an invariant pseudo-K\"ahler-Einstein    structure  on  $M = G/L$. \end{proposition}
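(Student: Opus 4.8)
The plan is to deduce the statement from the three facts recorded just above it — namely that $\tilde J$ is integrable, that $\tilde\rho=i\,d\tilde\delta$ is an invariant non-degenerate representative of $2\pi c_1(M,\tilde J)$, and hence that a suitable nonzero constant multiple $\tilde\o_\lambda$ of $\tilde\rho$ is an invariant $\tilde J$-compatible symplectic form satisfying $\tilde\rho=\lambda\tilde\o_\lambda$ — together with the classical fact that for an integrable compatible pair this last identity is exactly the pseudo-K\"ahler--Einstein equation. For completeness I would re-establish the integrability and the non-degeneracy, which are the only points that are not purely formal.

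First I would pin down $\tilde J$. Since by construction $\tilde J$ agrees with $J$ on the fibre directions $\gn$ and with $-J$ on $\gp$, Theorem~\ref{Thm1} shows its holomorphic space is $\gg(\tilde R^{10})$ with $\tilde R^{10}=\{\a\in R_\gm\ |\ \a(z_o)>0\}$. Because $\gl=C_\gg(t_o)$, every root of $R_\gl$ vanishes on $z_o$ while no root of $R_\gm$ does; hence $(\tilde R^{10}+\tilde R^{10})\cap R_\gm\subseteq\tilde R^{10}$, $(R_\gl+\tilde R^{10})\cap R\subseteq\tilde R^{10}$, and $\tilde R^{01}=-\tilde R^{10}$, which is precisely the integrability condition $[\tilde\gm^{10},\tilde\gm^{10}]\subseteq\gl^{\bC}\oplus\tilde\gm^{10}$ for an invariant almost complex structure. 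So $\tilde J$ is integrable; it is the complex structure associated with $(M,\o,J)$ described earlier.

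The step I expect to be the main obstacle is the non-degeneracy of $\tilde\rho$. I would first observe that the computation of the Ricci form carried out above uses only that an invariant almost complex structure preserves the root spaces, that the Nomizu operator of the Chern connection commutes with it, and that $\Lambda_H=\ad_H|_\gm$ on $\gl$; it therefore applies verbatim to $\tilde J$, showing that $i\,d\tilde\delta$ is the Chern--Ricci form of $\tilde J$ (so, in particular, depending only on $\tilde J$) and that $\tilde\rho(E_\a,E_{-\a})=-i\,\tilde\delta(H_\a)$. Thus non-degeneracy is equivalent to $\tilde\delta(H_\a)\neq0$, i.e.\ $\langle\tilde\delta,\a^\vee\rangle\neq0$, for every $\a\in R_\gm$. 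To prove this I would choose a system $R^+$ of positive roots of $R$ adapted to $z_o$ — with $R^+\cap R_\gm=\tilde R^{10}$ and with the simple roots $\Pi_\gl$ of $R_\gl$ a subset of the simple roots $\Pi$ of $\gg$ (both possible after a generic small perturbation of $z_o$ in $\gh$, since $R_\gl$ consists of the roots vanishing on $z_o$) — so that $\tilde\delta=2(\delta_\gg-\delta_\gl)$, where $\delta_\gg$, $\delta_\gl$ denote the sums of the positive roots of $\gg$ and of $\gl$. For $\a\in R_\gm\cap R^+$ write $\a^\vee=\sum_ic_i\a_i^\vee$ over $\Pi$; then all $c_i\ge0$ and, since $\a\notin R_\gl$, $c_i>0$ for some $\a_i\notin\Pi_\gl$. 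Using $\langle\delta_\gg,\a_i^\vee\rangle=2$ for every simple root, $\langle\delta_\gl,\a_i^\vee\rangle=2$ for $\a_i\in\Pi_\gl$, and $\langle\delta_\gl,\a_i^\vee\rangle=\sum_{\g\in R_\gl^+}\langle\g,\a_i^\vee\rangle\le0$ for $\a_i\notin\Pi_\gl$ (each $\g$ being a non-negative combination of simple roots, all of them distinct from $\a_i$), one obtains $\langle\tilde\delta,\a^\vee\rangle>0$; replacing $\a$ by $-\a$ gives the opposite sign. Hence $\tilde\delta(H_\a)\neq0$ throughout $R_\gm$, so $\tilde\rho$ is non-degenerate; being also closed, $\Ad(L)$-invariant and of type $(1,1)$ for $\tilde J$, it makes $(\tilde\o_\lambda,\tilde J)$ an invariant pseudo-K\"ahler structure for every $\lambda\neq0$.

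Finally I would conclude. Set $\tilde g_\lambda(X,Y):=\tilde\o_\lambda(X,\tilde JY)$, a $\tilde J$-Hermitian pseudo-Riemannian metric with $\tilde\o_\lambda=\tilde g_\lambda\circ\tilde J$. Since $\tilde J$ is integrable and $\tilde\o_\lambda$ is closed, $(\tilde g_\lambda,\tilde J)$ is pseudo-K\"ahler, so its Chern connection coincides with its Levi--Civita connection and its Chern--Ricci form equals the Ricci form of $\tilde g_\lambda$; consequently the Chern--Ricci form of $(\tilde g_\lambda,\tilde J)$ equals $\lambda\tilde\o_\lambda$ if and only if $\Ric(\tilde g_\lambda)=\lambda\,\tilde g_\lambda$. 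By the corollary above, applied to the invariant almost Hermitian structure $(\tilde g_\lambda,\tilde J)$ whose $(1,0)$-space is $\gg(\tilde R^{10})$, this Chern--Ricci form is $i\,d\tilde\delta=\tilde\rho$, while $\tilde\rho=\lambda\tilde\o_\lambda$ by the choice of $\tilde\o_\lambda$; therefore $\Ric(\tilde g_\lambda)=\lambda\,\tilde g_\lambda$, i.e.\ $(\tilde\o_\lambda,\tilde J)$ is the asserted invariant pseudo-K\"ahler--Einstein structure on $M=G/L$. The only genuine computation is the non-degeneracy above; everything else is either already in the text or the standard identification of the Chern connection with the Levi--Civita connection in the integrable case.
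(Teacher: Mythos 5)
Your proof is correct and follows the same route the paper itself takes: the paper offers no separate proof of this proposition --- its content is exactly the preceding paragraph (integrability of $\tilde J$, non-degeneracy of $\tilde\rho = i\,d\tilde\delta$, and rescaling) --- and your argument supplies the one assertion left unverified there, namely the non-degeneracy of $\tilde\rho$, via the standard Koszul-form positivity $\langle\delta_\gg-\delta_\gl,\a^\vee\rangle>0$ for $\a\in R^+\setminus R_\gl$. (Minor point: with the paper's normalization $\tilde\o_\l=-\l^{-1}\tilde\rho$ the Einstein constant comes out as $-\l$ rather than $\l$; this sign slip is in the paper, not in your argument, and does not affect the statement.)
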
	
	
\section{ Chern-Einstein almost K\"ahler homogeneous spaces }

\subsection{General approach for classification}
Let $(M=G/L,\o,J)$ be a homogeneous almost K\"ahler manifold with $G$ simple non-compact Lie group.  The description of all almost K\"ahler Chern Einstein homogeneous manifolds with $\rho = \l \o $ ($\l\in \mathbb R$) reduces to the solutions of the following equation
$$\d = \l (B\circ z_o).$$
Recall that a real simple Lie algebra is either the real form of a complex simple Lie algebra or it is the realification of a complex simple Lie algebra. The following Lemma shows that the last possibility does not occur.
\begin{lemma} If $M=G/L$ with $G$ simple admits an invariant almost K\"ahler structure, then $\gg$ is the real form of a complex simple Lie algebra. \end{lemma}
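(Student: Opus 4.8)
The plan is to show that the standing hypothesis that $L$ is compact already forces $\gg$ to admit a \emph{compact} Cartan subalgebra, and that the realification of a complex simple Lie algebra never admits one.

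First I would record that $t_o\in\gk$ is an elliptic (in particular $\ad$-semisimple) element and that it is central in $\gl=C_\gg(t_o)$, hence it lies in every Cartan subalgebra of the compact Lie algebra $\gl$; fix one, $\gt$. Since $\gt\subseteq\gl\subseteq\gk$, every element of $\gt$ is $\ad$-semisimple, while $t_o\in\gt$ gives $C_\gg(\gt)\subseteq C_\gg(t_o)=\gl$, so that $C_\gg(\gt)=C_\gl(\gt)=\gt$. Therefore $\gt$ is a Cartan subalgebra of $\gg$ contained in $\gk$ --- this is precisely the compact Cartan subalgebra $\gh=i\gt$ used in Section~2, and the relations $\overline{E_\a}=\pm E_{-\a}$ recorded there are exactly the statement $\sigma|_\gh=-\Id$. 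In particular $\mathrm{rank}\,\gk=\mathrm{rank}\,\gg$.

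Now suppose, towards a contradiction, that $\gg$ is not a real form of a complex simple Lie algebra; then $\gg=(\gg_0)_\R$ is the realification of a complex simple Lie algebra $\gg_0\neq 0$. A maximal compact subalgebra $\gk$ is then a compact real form of $\gg_0$, so $\mathrm{rank}\,\gk=\dim_\C\gh_0$ for a Cartan subalgebra $\gh_0$ of $\gg_0$, whereas $\mathrm{rank}\,\gg=\dim_\R(\gh_0)_\R=2\dim_\C\gh_0>\mathrm{rank}\,\gk$, contradicting the previous step. (Equivalently, one may stay inside the root-theoretic set-up: in the realification case $\gg^{\mathbb C}\cong\gg_1\oplus\gg_2$ is a sum of two simple ideals which $\sigma$ must interchange, since otherwise $\gg$ would decompose as the sum of the two nonzero real forms $\gg_1^\sigma\oplus\gg_2^\sigma$; but then $\gh^{\mathbb C}=\gh_1\oplus\gh_2$ with $\gh_i$ a Cartan subalgebra of $\gg_i$ and $\sigma(\gh_1)=\gh_2$, which is incompatible with $\sigma|_{\gh^{\mathbb C}}=-\Id$ unless $\gh_1=\gh_1\cap\gh_2=0$.) Hence $\gg$ is the real form of a complex simple Lie algebra.

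The only step requiring a little care is the verification in the first paragraph that the compact torus $\gt\subseteq\gk$ is a Cartan subalgebra of all of $\gg$ (equivalently, that $L$ compact forces the equal-rank condition $\mathrm{rank}\,\gk=\mathrm{rank}\,\gg$); the remainder is the standard description of $(\gg_0)_\R\otimes_\R\C$ together with a one-line dimension count, so I do not expect a genuine obstacle.
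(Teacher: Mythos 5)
Your proof is correct, but it reaches the conclusion by a genuinely different mechanism than the paper. The paper conjugates the element $z=t_o$ into the compact real form $\gq$ of $\gs$ (where $\gg=\gs_\R$) and observes that the centralizer of an element of $\gq$ in the complex algebra $\gs$ is a complex subalgebra, so its realification has the form $\gl+i\gl$ and cannot be compact --- a two-line contradiction that never mentions Cartan subalgebras. You instead extract from the compactness of $L$ the existence of a compact Cartan subalgebra $\gt\subseteq\gk$ (your verification that $C_\gg(\gt)=\gt$ is correct and is the only nontrivial step), i.e.\ the equal-rank condition $\mathrm{rank}\,\gk=\mathrm{rank}\,\gg$, and then kill the realification case by the dimension count $\mathrm{rank}\,(\gg_0)_\R=2\,\mathrm{rank}\,\gk$. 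Your route is slightly longer but more structural: it isolates the equal-rank criterion, which is exactly the standard characterization of semisimple groups admitting adjoint orbits with compact stabilizer, and it also justifies the existence of the compact Cartan subalgebra $\gt$ that Section~2 of the paper uses without comment. One caveat: your parenthetical ``equivalent'' argument is not correct as written, since $\sigma$ is \emph{not} $-\Id$ on $\gh^{\C}=\gt+i\gt$ (it fixes $\gt$ pointwise and is $-\Id$ only on $\gh=i\gt$), and a real form of $\gh^\C$ can meet each of the two half-dimensional ideals $\gh_1,\gh_2$ trivially, so no contradiction follows from that line as stated; the cleaner version of that aside would note that $\sigma(\gg_\a)=\gg_{-\a}$ forces $\sigma$ to preserve each simple ideal, since $\a$ and $-\a$ always live on the same ideal. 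Since this is offered only as an alternative and your main rank-counting argument is complete, the proof stands.
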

\begin{proof}  If $\gg = \gs_\mathbb R$, where $\gs$ is a simple complex Lie algebra, then a Cartan decomposition of $\gg$ is given by $\gg = \gq + i\gq$, where $\gq$ is a compact real form of $\gs$. If $z\in\gg$ is an element whose centralizer $\gl$ is a compact subalgebra, then there is an
automorphism $\phi$ of $\gg$ such that $\phi(z)\in\phi(\gl)\subseteq  \gq$. But then the centralizer of $\phi(z)$ in $\gg$ is given by $\phi(\gl)+ i\phi(\gl)$, a contradiction.\end{proof}
Therefore we will suppose that $\gg$ is a real form of the complex simple Lie algebra $\gg^c$. \par
{\bf Step 1.}\ As a first step we consider the list of all inner symmetric pairs $(\gg,\gk)$ of non-compact type with $\gg$ simple. Using the notation as in \cite{He}, p.~126, we obtain Table 1.  \par
\begin{table}[ht]\label{T1}
	\centering
	\renewcommand\arraystretch{1.1}
	\begin{tabular}{|c|c|c|c|}
		\hline
		{\mbox{Type}}				& 	$\gg$					&	$\gk$	& 	{\mbox{conditions}}			 	\\ \hline \hline
	$A$ &	$\su(p,q)$ & $\su(p) + \su(q) +\mathbb R$ & $p\geq q\geq 1$				\\ \hline
	$B$ & $\so(2p+1,2q)$ & $\so(2p+1) + \so(2q)$ & $p\geq 0,q\geq 1$			\\ \hline
	$C$ & $\gsp(n,\mathbb R)$ & $\su(n)+\mathbb R$ & $n\geq 1$			\\ \hline
	$C$ & $\gsp(p,q)$ & $\gsp(p) + \gsp(q)$ & $p,q\geq 1$			\\ \hline
	$D$ & $\so(2n)^*$ & $\su(n)+\mathbb R$ & $n\geq 3$			\\ \hline
	$D$ & $\so(2p,2q)$ & $\so(2p) + \so(2q)$ & $p,q\geq 1,p+q\geq 3$			\\ \hline
	$G$ & $\gg_{2(2)}$ & $\su(2)+\su(2)$ & 			\\ \hline
	$F$ & $\gf_{4(-20)}$ & $\so(9)$ & 			\\ \hline
	$F$ & $\gf_{4(4)}$ & $\su(2)+\gsp(3)$ & 			\\ \hline
	$E$ & $\gge_{6(2)}$ & $\su(2)+\su(6)$ & 			\\ \hline
	$E$ & $\gge_{6(-14)}$ & $\so(10)+\mathbb R$ & 			\\ \hline
	$E$ & $\gge_{7(7)}$ & $\su(8)$ & 			\\ \hline
	$E$ & $\gge_{7(-5)}$ & $\su(2)+\so(12)$ & 			\\ \hline
	$E$ & $\gge_{7(-25)}$ & $\gge_6+\mathbb R$ & 			\\ \hline
	$E$ & $\gge_{8(8)}$ & $\so(16)$ & 			\\ \hline
	$E$ & $\gge_{8(-24)}$ & $\su(2)+\gge_7$ & 			\\ \hline
	\end{tabular}
	\vspace{0.1cm}
	\caption{Inner symmetric pairs $(\gg,\gk)$ of non-compact type with $\gg$ simple.}\label{ncptSHF}
\end{table}
\renewcommand\arraystretch{1}
\par
{\bf Step 2.}\ We fix a Cartan subalgebra $\gt$ in $\gk$ and we choose an {\it admissible} element $z\in \gh:=i\gt$, i.e. such that
$$C_{\gk}(t_o) = C_{\gg}(t_o) := \gl.$$
{\bf Step 3} We define
$$R_c^+(z) := R_c^{10}(z) = \{\a\in R_c|\ \a(z)>0\},$$
$$R_{nc}^+(z) := R_{nc}^{01}(z) = \{\a\in R_{nc}|\ \a(z) >0\}$$
and we set
$$\d_c(z) := 2\sum_{\a\in R_c^+(z)}\a,\qquad
\d_{nc}(z) := 2\sum_{\a\in R_{nc}^+(z)}\a,$$
$$\d(z) := \d_c(z) - \d_{nc}(z).$$
{\bf Step 4.} We solve the equation
\beq \label{equat}\d(z) = \l Bz.\eeq
\subsection{Examples of Chern Einstein manifolds}\label{Ex} In the special case when $\gl$ has one-dimensional center $\gz = i\mathbb R z$, then the equation \ref{equat} is automatically satisfied for some $\l\in\mathbb R$ since $\d(z)$ belongs to the center of $\gl$.
In the particular case when $\gl=\gk$, the space is K\"ahler and irreducible Hermitian symmetric, hence automatically Chern-Einstein. We may then  start with a non-Hermitian symmetric pair $(\gg,\gk)$ out of Table 1 and all simple factors of $\gk$ will be included in $\gl$ except one, say $\gk_1$. In $\gk_1$ we take $\gl_1$ so that the pair $(\gk_1,\gl_1)$ is a flag manifold whose corresponding painted Dynkin diagram has only one black node (see e.g.~\cite{AP},~\cite{BFR}).
We then have to restrict ourselves to the cases when the centralizer in $\gg$ of the center of $\gl_1$ coincides with $\gl$. It is not difficult to see that for classical $\gg$ the only pairs $(\gg,\gl)$ with $\dim\gz(\gl) = 1$  are given in Table 2.
\begin{table}[ht]\label{T2}
	\centering
	\renewcommand\arraystretch{1.1}
	\begin{tabular}{|c|c|c|c|}
		\hline
		{\mbox{Type}}				& 	$\gg$					&	$\gl$	& 	{\mbox{conditions}}			 	\\ \hline \hline
	
		$B$ & $\so(2p+1,2q)$ & $\mathbb R + \so(2p+1) + \su(q)$ & $p\geq 0,q\geq 1$			\\ \hline
		$C$ & $\gsp(p,q)$ & $\begin{matrix} \bullet\ \mathbb R + \su(p) + \gsp(q)\\\bullet\ \mathbb R + \gsp(p) + \su(q)\end{matrix}$  & $p,q\geq 1$			\\ \hline
		$D$ & $\so(2p,2q)$ & $\begin{matrix} \bullet\ \mathbb R + \su(p) + \so(2q)\\\bullet\ \mathbb R + \so(2p) + \su(q)\end{matrix}$ & $p,q\geq 1,p+q\geq 3$			\\ \hline
		
	\end{tabular}
	\vspace{0.1cm}
	\caption{Pairs $(\gg,\gl)$  with $\gg$ simple of classical type, $\gl\subsetneq\gk$ and $\dim\gz(\gl)=1$.}\label{ncptSHF}
\end{table}
\renewcommand\arraystretch{1}
\par

Note also that in \cite{DV},~ Theorem 5, the homogeneous manifolds $M = \SO(2p,q)/\U(p)\times \SO(q)$ are shown to be Chern-Einstein with constant $\l = 2(p-q-1)$, which therefore may vanish or assume a positive/negative sign.

\subsection{The case of abelian $L$}
We now restrict ourselves to the case when the isotropy $L$ is a maximal torus. We recall that for any admissible $z\in \gh$ the set
$$R_c^+(z) \cup R_{nc}^+(z)$$
gives a sytem of positive roots of $\gg^C$ and therefore there exists a unique element $w$ in the Weyl group $W$ that maps the standard system of positive root $R_o^+$ into  $R_c^+(z) \cup R_{nc}^+(z)$. \par
Then
$$\d_c(z) = 2\sum_{\a\in w(R_o^+)\cap R_c} \a, \quad \d_{nc}(z)= 2\sum_{\a\in w(R_o^+)\cap R_{nc}} \a.$$
Note that the equation \eqref{equat} with $\l\neq 0$ admits a solution if and only
\beq\label{cond} \forall \a\in w(R_o^+)\quad  \langle \a,\d(z)\rangle > 0\ (\mbox{{or}}\ <0) \qquad \mbox{{if}}\ \l>0\ (\l<0\ \mbox{{resp.}}).\eeq
We remark that the computation of $\d(z)$ depends only on the root system of $\gg^C$ and its decomposition into compact and non-compact roots together with the action of Weyl group.

In the next sections we will go through the classical Lie algebras of type A,B,C,D.

\subsection{Proof of the main Theorem in case $\gg$ of type $A_n$}

According to Table 1, we analyze the case $\gg = \su(p,q)$, $p\geq q\geq 1$. The standard Cartan subalgebra $\gt\subset \gk$ gives rise to the root system $R$ of $\gg^c = \gsl(n+1,\mathbb C)$, where $p+q=n+1$, given by $\{\epsilon_i-\epsilon_j|\  i,j=1,\ldots,n+1,\ i\neq j\}$. The standard system of positive roots $R_o^+$ is given by $\{\epsilon_i-\epsilon_j|\ i<j\}$ and the Weyl group $W$ is given by the full group of permutations $\mathcal S_{n+1}$. \par
If we put $P:=\{1,\ldots, p\}$ and $Q:=\{p+1,\ldots,n+1\}$, we have
$$R_c=\{\e_i-\e_j|\ i,j\in P\ \mbox{or}\ i,j\in Q, i\neq j \},\quad R_{nc} = R\setminus R_c.$$
We consider an element $\s$ of the Weyl group, i.e. $\sigma\in \mathcal S_{n+1}$, and we define $P_\s = \s^{-1}(P)$, $Q_\s := \s^{-1}(Q)$. Then
$$\s(R_o^+)\cap R_{c} = \{\es i - \es j|\ i,j\in P_\s,\ i<j\}\cup
\{\es i - \es j|\ i,j\in Q_\s,\ i<j\}, $$
$$\s(R_o^+)\cap R_{nc} = \{\es i - \es j|\ i<j,\ i\in P_\s, j\in Q_\s\}\cup
\{\es i - \es j|\ i<j,\ i\in Q_\s, j\in P_\s\}. $$
For $i\in \{1,\ldots,n+1\}$ we set

$$ k_Q(i) := |\{k\in Q_\s|\ k >i\}|,\quad k_P(i) := |\{k\in P_\s|\ k >i\}|,$$
$$ \bar k_Q(i) := |\{k\in Q_\s|\ k <i\}|,\quad \bar k_P(i) := |\{k\in P_\s|\ k <i\}|.$$
We have

$$\sum_{\a\in \s(R^+)\cap R_{c}} \a = \sum_{\begin{smallmatrix} i<j& \\ i,j\in P_\s& \end{smallmatrix}} \es i - \es j\ + \sum_{\begin{smallmatrix} i<j& \\ i,j\in Q_\s& \end{smallmatrix}} \es i - \es j=$$
$$ = \sum_{i\in P_\s}(k_P(i)- \bar k_P(i))\ \es i +
\sum_{i\in Q_\s}(k_Q(i) - \bar k_Q(i))\ \es i$$
and similarly we obtain
$$\sum_{\a\in \s(R^+)\cap R_{nc}} \a = \sum_{i\in P_\s}(k_Q(i)-\bar  k_Q(i))\ \es i +
\sum_{i\in Q_\s}(k_P(i) - \bar k_P(i))\ \es i.$$
Therefore for any admissible $z\in\gh$ with corresponding system of positive roots given by $\sigma(R_o^+)$ we obtain
\beq \label{d}\frac 12 \d(z) = \sum_{i\in P_\s}(k_P(i)- \bar k_P(i) - k_Q(i)+\bar  k_Q(i))\ \es i +\eeq
$${}\qquad +\sum_{i\in Q_\s}(k_Q(i) - \bar k_Q(i)-k_P(i) + \bar k_P(i))\ \es i.$$

\begin{lemma}\label{L1} (i)\ If $i\in P_\s$ we have
	$$k_P(i)- \bar k_P(i) - k_Q(i)+\bar  k_Q(i) = 4k_P(i) + 2i - n - 2p.$$
	(ii)\  If $i\in Q_\s$ we have
	$$k_Q(i) - \bar k_Q(i)-k_P(i) + \bar k_P(i) = -4k_P(i)-2i+n+2p +2.$$
\end{lemma}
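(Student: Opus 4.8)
The plan is to prove both identities by a direct counting argument resting on four elementary relations among $k_P(i),\bar k_P(i),k_Q(i),\bar k_Q(i)$. First I would record that, since $P_\s\sqcup Q_\s=\{1,\dots,n+1\}$, splitting the indices larger than $i$ (resp.\ smaller than $i$) according to membership in $P_\s$ or $Q_\s$ yields
\[
k_P(i)+k_Q(i)=n+1-i,\qquad \bar k_P(i)+\bar k_Q(i)=i-1,
\]
and that, if $i\in P_\s$, every other element of $P_\s$ lies strictly on one side of $i$, so $k_P(i)+\bar k_P(i)=|P_\s|-1=p-1$; symmetrically, if $i\in Q_\s$, then $k_Q(i)+\bar k_Q(i)=|Q_\s|-1=q-1$, where $q=n+1-p$.

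For part (i), assuming $i\in P_\s$, I would use these to express $\bar k_P(i)=p-1-k_P(i)$, $k_Q(i)=n+1-i-k_P(i)$, and $\bar k_Q(i)=(i-1)-\bar k_P(i)=i-p+k_P(i)$, and then substitute into $k_P(i)-\bar k_P(i)-k_Q(i)+\bar k_Q(i)$: the four occurrences of $k_P(i)$ combine to $4k_P(i)$ and the constant terms collapse to $2i-n-2p$, which is the assertion.

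For part (ii), assuming $i\in Q_\s$, the symmetric substitution (interchanging the roles of $P$ and $Q$) gives $k_Q(i)-\bar k_Q(i)-k_P(i)+\bar k_P(i)=4k_Q(i)+2i-2q-n$; I would then eliminate $k_Q(i)$ using $k_Q(i)=n+1-i-k_P(i)$ together with $q=n+1-p$ to rewrite this as $-4k_P(i)-2i+n+2p+2$.

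There is no serious obstacle here — the whole lemma is bookkeeping — so the only points to watch are the accounting of the $-1$'s that record whether the index $i$ itself is being counted, and the (deliberate) choice to present the part (ii) coefficient through $k_P(i)$ rather than $k_Q(i)$, so that the two sums making up $\tfrac12\d(z)$ in \eqref{d} are expressed in a common variable and can be compared directly in the sequel.
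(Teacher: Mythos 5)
Your proof is correct and is essentially the same bookkeeping argument as the paper's: both rest on the relations $k_P(i)+k_Q(i)=n+1-i$, $\bar k_P(i)+\bar k_Q(i)=i-1$, together with the count of the elements of $P_\s$ (or $Q_\s$) on either side of $i$. The only cosmetic difference is in part (ii), where the paper substitutes $k_P(i)+\bar k_P(i)=p$ directly for $i\in Q_\s$, while you first derive the symmetric formula in terms of $k_Q(i)$ and then eliminate it; both routes give the stated identity.
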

\begin{proof} We start noting the following trivial equalities for all $i=1,\ldots,n+1$
	\beq\label{rel}k_P(i) + k_Q(i) = n+1-i,\quad \bar k_P(i) + \bar k_Q(i) = i-1.\eeq
	
	If $i\in P_\s$ then $k_P(i)+\bar k_P(i) = p-1$ and therefore the first claim follows using \eqref{rel}. The second claim follows similarly using that $ k_P(i)+\bar k_P(i) = p$ for $i\in Q_\s$.\end{proof}
We now solve the equation \eqref{equat} for $\l\neq 0$. \par \medskip
{\bf Case  $\l>0$.}\ Condition \eqref{cond} together with \eqref{d} and Lemma \ref{L1} imply the following: if $i,j\in P_\s$, $i<j$,
\beq\label{P} 2k_P(i) + i > 2k_P(j) + j,\eeq
while if $i,j\in Q_\s$, $i<j$,  we have
\beq \label{Q} 2k_P(j)+ j > 2k_P(i) + i.\eeq
Now suppose $i,j\in P_\s$ with $i<j$ and any $i<a<j$ belongs to $Q_\s$. Then
$k_P(i) = k_P(j) + 1$ and \eqref{P} implies $2> j-i$, i.e. $j=i+1$. This means that $P_\s$ is made of consecutive numbers. If we repeat the same argument with $Q_\s$ we obtain that also $Q_\s$ is made of consecutive numbers. Therefore we are left with the following two possibilities:
\begin{enumerate}
\item[(a)] $P_\s =\{1,\ldots,p\},\ Q_\s=\{p+1,\ldots,n+1\},$
\item[(b)] $P_\s=\{q+1,\ldots,n+1\},\ Q_\s = \{1,\ldots,q\}.$
\end{enumerate}
We now consider condition \eqref{cond} with $\a\in \s(R_o^+)\cap R_{nc}$. In case (a) we choose $\a = \e_{\s(p)}-\e_{\s(p+1)}$ and \eqref{cond} gives $-2n>0$, a contradiction. In case (b) we choose
$\a = \e_{\s(q)}-\e_{\s(q+1)}$ and \eqref{cond} gives again $-2n>0$, a contradiction. \par \medskip
{\bf Case $\l<0$.}\ For every $i,j\in P_\s$, $i<j$, we have
$$2k_P(i)+i < 2k_P(j) + j.$$
We claim that $p=1$. Indeed, suppose there exist $i<j$ in $P_\s$ with
$(i,j)\cap P_\s=\emptyset$. Then $k_P(i)= k_P(j) +1$ and therefore $j-i>2$. This means that there exist at least two consecutive numbers, say $l,l+1$ in $Q_\s$. Then $k_P(l) = k_P(l+1)$ and condition \eqref{cond} with $\a = \e_{\s(l)}-\e_{\s(l+1)}$ gives the contradiction $l>l+1$. This implies that $P_\s$ has only one element, i.e. $p=1$. Hence $1=q\leq p=1$. In this case $R_c=\emptyset$ and $\gg = \gsl(2,\mathbb R)$, $\gl = \mathbb R$. Since $R = R_{nc}$ consists of one root up to sign, the equation $\rho=\l\o$ is satisfied for some $\l<0$.\par \medskip
{\bf Case $\l=0$.}\ We recall that $\sum_{i=1}^{n+1}\e_i=0$ is the only relation among the $\{\e_i\}_{i=1,\ldots,n+1}$, so that the equation $\d(z)=0$ implies that all the coefficients in the right-hand side of \eqref{d} are mutually equal.
\begin{lemma} Two elements in $P_\s$ or $Q_\s$ are not consecutive.\end{lemma}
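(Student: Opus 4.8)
The plan is to exploit that in the case $\l=0$ the equation $\d(z)=0$ forces all the coefficients appearing in \eqref{d} to be equal. Indeed $\sum_{i=1}^{n+1}\e_i=0$ is the only linear relation among the $\e_i$, hence also among the $\es i$ since $\s$ merely permutes them; so $\d(z)=0$ means there is a constant $c$ with
$$k_P(i)-\bar k_P(i)-k_Q(i)+\bar k_Q(i)=c\quad (i\in P_\s),\qquad k_Q(i)-\bar k_Q(i)-k_P(i)+\bar k_P(i)=c\quad (i\in Q_\s).$$
By Lemma \ref{L1} this becomes $c=4k_P(i)+2i-n-2p$ for every $i\in P_\s$, and $c=-4k_P(i)-2i+n+2p+2$ for every $i\in Q_\s$.

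Next I would record the elementary behaviour of $k_P$ under $i\mapsto i+1$: since $k_P(i)=|\{k\in P_\s\st k>i\}|$, we have $k_P(i)-k_P(i+1)=1$ if $i+1\in P_\s$ and $k_P(i)-k_P(i+1)=0$ if $i+1\in Q_\s$. With this in hand the lemma follows by contradiction. Suppose $i,i+1\in P_\s$; then $k_P(i)=k_P(i+1)+1$, and subtracting the two instances of $c=4k_P(\cdot)+2(\cdot)-n-2p$ gives $0=4\cdot 1-2=2$, absurd. Suppose instead $i,i+1\in Q_\s$; then $i+1\notin P_\s$, so $k_P(i)=k_P(i+1)$, and subtracting the two instances of $c=-4k_P(\cdot)-2(\cdot)+n+2p+2$ gives $0=0+2=2$, again absurd. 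Hence no two consecutive integers can lie in the same block $P_\s$ or $Q_\s$.

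There is essentially no obstacle here beyond bookkeeping: the only points needing a moment's care are choosing the correct formula of Lemma \ref{L1} according to whether the index lies in $P_\s$ or in $Q_\s$, and the observation that $k_P$ drops by exactly $1$ (resp. stays constant) when we increase $i$ by one across an element of $P_\s$ (resp. of $Q_\s$). The payoff, to be used afterwards, is that since $P_\s$ and $Q_\s$ partition $\{1,\dots,n+1\}$ and neither contains two consecutive integers, they must interlace, which will pin down $|p-q|\le 1$ and eventually single out $\gg=\su(p+1,p)$.
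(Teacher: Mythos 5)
Your proof is correct and takes essentially the same route as the paper: both deduce from $\d(z)=0$ and the single relation $\sum_{i=1}^{n+1}\e_i=0$ that the coefficients in \eqref{d} are all equal, then combine Lemma \ref{L1} with the fact that $k_P$ drops by one across an element of $P_\s$ and is unchanged across an element of $Q_\s$ to reach the contradiction $0=2$. The paper only writes out the $P_\s$ case and says the $Q_\s$ case is similar; you have simply made that second case explicit.
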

\begin{proof} Indeed, suppose $i,i+1\in P_\s$. Then $k_P(i)=k_P(i+1)+1$ and
	$$4k_P(i+1)+2(i+1) = 4k_P(i) + 2i,$$
	yielding a contradiction. A similar argument applies for $Q_\s$.\end{proof}
As a corollary, we see that $n$ is even. Indeed, consider $i\in P_\s$ and $i+1\in Q_s$. Then $k_P(i) = k_P(i+1) +1$ and using Lemma \ref{L1} we have
$$8k(i) + 4i -2n -4p -4 = 0,$$
that implies $n$ is even. Now $P_\s$ and $Q_\s$ consist precisely of the sets of even or odd integers in $\{1,\ldots,n+1\}$. Since $n$ is even, the set of even (odd) numbers less or equal to $n+1$ has cardinality $\frac n2$ ($\frac n2 + 1$ resp.) and since we supposed $p\geq q$, we have $p=q+1$ and
$$P_\s = \{1,3,5,\dots,n+1\},\quad Q_\s = \{2,4,6,\ldots,n\}.$$
Viceversa it is immediate to check that with the above choice of $P_\s,Q_\s$ the equation $\d(z)=0$ is satisfied. Notice that a suitable permutation $\s$ is given by $\s(2k) = p+k,\s(2k+1)=k+1$ for $k=1,\ldots,q$.
\begin{example} We consider $\gg = \su(3,2)$. A permutation $\s$ that produces a Chern-Ricci flat almost K\"ahler structure on $\SU(3,2)/{\mbox{T}}^4$ is given by the cycle $(2453)$ and the corresponding system of positive roots is given as follows (here $\e_{ij} := \e_i-\e_j$ for the sake of brevity):
$$R_c^+(z) = \{\e_{12},\e_{13},\e_{23},\e_{45}\},\
R_{nc}^+(z) = \{\e_{14},\e_{15},\e_{25},\e_{42},\e_{43},\e_{53}\}.$$ \end{example}

\subsection{Proof of the main Theorem in case $\gg$ of type $B_n$} According to Table 1, when $\gg^c = \so(2n+1,\mathbb C)$, $n\geq 2$, we consider the subalgebras $\gg = \so(2p+1,2q)$ with $p+q=n$, $p,q\geq 1$ and $\gg = \so(1,2n)$ separately. The standard root system $R$ is given by
$R = \{\pm \e_i,\ \pm \e_i\pm \e_j,\ 1\leq i\neq j\leq n\}$ ($\pm$ independent), with $R_o^+ = \{\e_i,\ \e_i\pm\e_j,\ 1\leq i<j\leq n\}$. \par
We start considering the case $\gg = \so(2p+1,2q)$ with $p+q=n$, $p,q\geq 1$.
The compact roots are given by
$$R_c = \{\pm \e_i,\ i=1,\ldots,p\} \cup \{\pm\e_i\pm\e_j,\ 1\leq i\neq j\leq p\} \cup \{\pm\e_i\pm\e_j,\ p+1\leq i\neq j\leq n\}.$$
An element $w$ in the Weyl group $W\cong \mathbb (Z_2)^n\rtimes \mathcal S_n$ acts as $w(\e_i) = \phi_i\e_{\s(i)}$, where $\phi_i\in \{1,-1\}$ can be chosen independently. Therefore an easy computation shows that
\beq
\frac 12 \d(z) = \sum_{i\in P_\s}(4k_P(i)+2i-2n+1) \phi_i\e_{\s(i)} +
\sum_{i\in Q_\s} (4k_Q(i)+2i - 2n -1) \phi_i\e_{\s(i)},\eeq
where $P_\s := \s^{-1}\{1,\ldots,p\}$, $Q_\s := \s^{-1}\{p+1,\ldots,n\}$ and
$k_{P/Q}(i):= |\{j\in P_\s/Q_\s|\ j>i \}|$.\par \medskip\textit{}
{\bf Case $\l>0$.}\  If $i\in Q_\s$, then $\a:= \phi_i\e_{\s(i)}\in w(R_o^+)$ and
$\langle \d(z),\a\rangle >0$ implies $4k_Q(i)+2i > 2n +1$. If $i_Q$ is the maximum element in $Q_\s$, then $k_Q(i_Q)=0$ and therefore $2i_Q>2n+1$, a contradiction.\par \medskip
{\bf Case $\l<0$.}\ The maps $c_P:P_\s\ni i\mapsto 4k_P(i)+2i-2n+1$ and
$c_Q:Q_\s\ni j\mapsto 4k_Q(i)+2i - 2n -1$ are negative and strictly increasing. It follows that if $i< j$ are two numbers both in $P_\s$ or both in  $Q_\s$ then $j-i>2$, contradicting $P_\s\cup Q_\s=\{1,\ldots,n\}$. This implies that $p=q=1$. Since $2=n\not\in P_\s$ (otherwise $c_P(2) = 1$), we see that $P_\s=\{1\}$ and $Q_\s=\{2\}$. In this case $c_P(1)=-1=c_Q(2)$ and therefore we contradict \eqref{cond} using $\a= \phi_1\e_1-\phi_2\e_2$.\par \medskip{\bf Case $\l=0$.}\ This cannot occur as all the coefficients of $\frac 12\d(z)$ are odd numbers.\par \medskip
We now deal with the case $\gg = \so(1,n)$, where the compact roots are given by $R_c = \{\pm \e_i \pm \e_j,\ 1\leq i\neq j\leq n\}$. It is immediate to compute
$$\frac 12 \d(z) = \sum_{i=1}^n (2n-2i-1)\phi_i\es i.$$
Since the coefficients $2n-2i-1$ do not have a constant sign for $i=1,\ldots,n$ and cannot vanish, we see that equation \eqref{equat} has no solution.
\par \medskip

\subsection{Proof of the main Theorem in case $\gg$ of type $C_n$}

When $\gg$ is a real form of $\gsp(n,\bC)$, $n\geq 3$, we need to consider two subcases according to Table 1, namely when $\gk = \gu(n)$ or $\gk = \gsp(p) + \gsp(q)$, $n=p+q$. \par
The standard root system of $\gsp(n,\bC)$ is given by $R = \{\pm \e_i \pm \e_j,\  1\leq i< j\leq n\}\cup\{\pm 2\e_i,\ i= 1,\ldots,n\}$ with
$R_o^+ = \{ \e_i\pm \e_j, 2\e_i,\ i<j,\ i,j=1,\ldots,n  \}$. The Weyl group of $\gg^c$ is a semidirect product $(\bZ_2)^n\rtimes \mathcal S_n$ and any $w\in W$ acts as $w(\e_i) = \phi_i\e_{\s(i)}$, where $\phi_i\in \{1,-1\}$ can be chosen independently.
\subsubsection{Case $\gk = \gu(n)$.}


We have $$R_c = \{ \e_i - \e_j,\ i\neq j = 1,\ldots, n\}.$$
 Given $w\in W$ we have
$$w(R_o^+)\cap R_c = \{\phi_i\es i + \phi_j\es j|\ i<j,\ \phi_i\phi_j < 0\} \cup
\{\phi_i\es i - \phi_j\es j|\ i<j,\ \phi_i\phi_j > 0\}$$
and therefore, if we denote $A:= \{i|\ \phi_i=1\}$, $B:= \{i|\ \phi_i=-1\}$,
$$\sum_{\a\in w(R_o^+)\cap R_c}\a = \sum_{\begin{smallmatrix} i<j& \\ \phi_i\phi_j<0& \end{smallmatrix}}
\phi_i\es i + \sum_{\begin{smallmatrix} i<j& \\ \phi_i\phi_j<0& \end{smallmatrix}}
\phi_j\es j +\sum_{\begin{smallmatrix} i<j& \\ \phi_i\phi_j>0& \end{smallmatrix}}
\phi_i\es i - \sum_{\begin{smallmatrix} i<j& \\ \phi_i\phi_j>0& \end{smallmatrix}}
\phi_j\es j = $$

$$= \sum_{i\in A}\sum_{B\ni j>i} \es i - \sum_{i\in B}\sum_{A\ni j>i} \es i +
\sum_{j\in A}\sum_{B\ni i<j} \es j - \sum_{j\in B}\sum_{A\ni i<j} \es j  + $$
$$+ \sum_{i\in A}\sum_{A\ni j>i} \es i -  \sum_{i\in B}\sum_{B\ni j>i} \es i -
\sum_{j\in A}\sum_{A\ni i<j} \es j + \sum_{j\in B}\sum_{B\ni i<j} \es j.$$
We set
$$k_B(i):= |\{k\in B| k> i\}|,\ k_A(i):= |\{k\in A| k> i\}|,$$
$$\bar k_B(i) := |\{k\in B| k< i\}|,\ \bar k_A(i):= |\{k\in A| k< i\}|.$$
Hence
$$\sum_{\a\in w(R_o^+)\cap R_c}\a = \sum_{i\in A}k_B(i)\ \es i - \sum_{i\in B}k_A(i)\ \es i
+ \sum_{j\in A}\bar k_B(j) \es j - \sum_{j\in B}\bar k_A(j) \es j + $$
$$ + \sum_{i\in A}k_A(i)\ \es i -  \sum_{i\in B}k_B(i)\ \es i - \sum_{j\in A}\bar k_A(j)\ \es j + \sum_{j\in B}\bar k_B(j) \es j =$$
$$= \sum_{i\in A}(k_B(i) + \bar k_B(i)+ k_A(i) - \bar k_A(i))\ \es i + \sum_{i\in B}
(\bar k_B(i) - k_A(i) - \bar k_A(i) - k_B(i))\ \es i = $$
$$=\sum_{i\in A}(-2\bar k_A(i)+n-1)\ \es i + \sum_{i\in B}
(2\bar k_B(i) - n+1)\ \es i.$$
Using the fact that $\sum_{\a\in R_o^+}w\a = 2\sum_{i=1}^n (n-i+1)\ \phi_i\es i$, we obtain
 $$\frac12 \d(z) = \sum_{i\in A}[-4\bar k_A(i)+2i-4]\ \es i + \sum_{i\in B}
 [4\bar k_B(i) -2i +4]\ \es i.$$
 Denote by $c_A:= -4\bar k_A(i)+2i-4$ and $c_B:= 4\bar k_B(i) -2i +4$ the two coefficients. We now discuss the equation \eqref{equat}.\par
 First suppose $\l>0$: using the root $\phi_i\es i\in w(R_o^+)$ and \eqref{cond}, we see that $c_A(i)>0$ if $i\in A$ and $c_B(i)<0$ if $i\in B$. Now, if $1\in A$, then $\bar k_A(1)=0$ and $c_A(1)<0$, while if $1\in B$ then $c_B(1)>0$, showing that $1\not\in A\cup B$, a contradiction.\par
 If $\l<0$, then similarly as above we have $c_A(i)<0$ if $i\in A$ and $c_B(i)>0$ if $i\in B$. Suppose $A$ is not empty and let $i_A$ be the minimum element in $A$: then $i_A<2$, i.e. $i_A=1$. Similarly, if $B$ is not empty, its minimum point is $1$, showing $1\in A\cap B$, a contradiction. Then either $A$ or $B$ is empty. If $B=\emptyset$, then $\bar k_A(i)=i-1$ and $c_A(i)=-2i$. Using the roots $\a=\es i - \es {i+1}$ and \eqref{cond}, we see that $c_A$ is increasing, a contradiction. Similarly $A=\emptyset$
  leads to a contradiction.\par
 Finally, $\l=0$ is also impossible, as $c_A(i)=0,\ i\in A$ and $c_B(i) = 0,\ i\in B$ force $i\in A\cup B$ to be even, a contradiction.
 \par

\subsubsection{Case $\gk=\gsp(p)+\gsp(q)$, $p\geq q\geq 1$.} If we denote $P := \{1,\ldots,p\}$ and $Q:= \{p+1,\ldots,n\}$ we have
$$R_c = \{ \pm \e_i \pm \e_j,\ \pm 2\e_i\ |\ i,j\in P\} \cup \{ \pm \e_i \pm \e_j,\ \pm 2\e_i\ |\ i,j\in Q\}$$
and therefore, if $P_\s := \s^{-1}(P), Q_\s := \s^{-1}(Q)$,
$$w(R_o^+)\cap R_c = \{\phi_i\es i \pm \phi_j\es j,\  2\phi_i\es i|\ i,j\in P_\s, i<j\}\ \cup  $$
$$\cup\ \{\phi_i\es i \pm \phi_j\es j,\  2\phi_i\es i|\ i,j\in Q_\s, i<j\}.
$$
Therefore
$$\sum_{\a\in w(R_o^+)\cap R_c}\a = 2 \sum_{i\in P_\s} (k_P(i)+1)\phi_i\es i + 2 \sum_{i\in Q_\s} (k_Q(i)+1)\phi_i\es i,$$
where $k_{P/Q}(i) := |\{k\in P_\s/Q_\s|\ k>i \}|$. We then obtain
$$\frac 12 \d(z) = \sum_{i\in P_\s} (4k_P(i)+2i-2n+2)\phi_i\es i +  \sum_{i\in Q_\s} (4k_Q(i)+2i-2n+2)\phi_i\es i.$$
Denote by $c_P:= 4k_P(i)+2i-2n+2$ and $c_Q:= 4k_Q(i)+2i-2n+2$ the two coefficients for $i\in P_\s, Q_\s$ respectively. We now discuss the equation \eqref{equat}.\par
First suppose $\l>0$: we have $c_P,c_Q>0$ and if we denote by $i_P$ the maximum element in $P_\s$ we have $i_P>n-1$, i.e. $i_P=n$. Similarly $n\in Q_\s$, contradicting the fact that $P_\s$ and $Q_\s$ are disjoint.\par
Suppose now $\l<0$. Then $c_P,c_Q$ are negative and strictly increasing. If $i< j$ are two numbers both in $P_\s$ or both in  $Q_\s$ then $j-i>2$, contradicting $P_\s\cup Q_\s=\{1,\ldots,n\}$. This implies that $p=q=1$. Then $c_P(i)=2i-2<0$ for $i\in P_\s$ and $c_Q(i)=2i-2<0$ for $i\in Q_\s$, a contradiction.\par
If now $\l=0$, we see that $c_P=0$ and $c_Q=0$ imply that $i\equiv n-1\ (\mbox{{mod}}\ 2)$ for every $i=1,\ldots,n$, a contradiction.

\subsection{Proof of the main Theorem in case $\gg$ of type $D_n$, $n\geq 3$}

When $\gg$ is a real form of $\so(2n,\bC)$, $n\geq 3$, we need to consider two subcases according to Table 1, namely when $\gk = \gu(n)$ or $\gk = \so(2p) + \so(2q)$, $n=p+q$. \par
The standard root system of $\so(2n,\bC)$ is given by $R = \{\pm \e_i \pm \e_j,\  1\leq i< j\leq n\}$ with
$R_o^+ = \{ \e_i\pm \e_j,\ i<j,\ i,j=1,\ldots,n  \}$. The Weyl group of $\gg^c$ is a semidirect product $(\bZ_2)^n\rtimes \mathcal S_n$ and any $w\in W$ acts as $w(\e_i) = \phi_i\e_{\s(i)}$, where $\phi_i\in \{1,-1\}$ satisfies $\prod_{i=1}^n\phi_i = 1$. \par
\subsubsection{Case $\gk=\gu(n)$.} This case can be dealt with similar arguments as in the subsection 4.6.1 for $\gg^c$ of type $C_n$ and $\gk = \gu(n)$ and we will omit the detailed computations, keeping the same notation. We have
$$\frac 12 \d(z) = \sum_{i\in A}[-4\bar k_A(i)+2i-2]\ \es i + \sum_{i\in B}
[4\bar k_B(i) -2i +2]\ \es i.$$
Denote by $c_A:= -4\bar k_A(i)+2i-2$ and $c_B:= 4\bar k_B(i) -2i +2$ the two coefficients. \par
If $\l>0$, then $c_A(i)>0$ if $i\in A$ and $c_B(i)<0$ if $i\in B$. It follows that $1\not\in A\cup B$, a contradiction.\par
If $\l<0$, then $c_A(i)<0$ if $i\in A$. If $i_A$ is the minimum point of $A$, then $\bar k_A(i_p)=0$ and therefore $i_p<1$, a contradiction. Then $A$ is empty. Similarly the minimum point $i_B$ of $B$ satisfies $i_B<1$, showing that $B$ is empty, a contradiction.\par
Finally, $\l=0$ is also impossible, as $c_A(i)=0,\ i\in A$ and $c_B(i) = 0,\ i\in B$ force every $i\in A\cup B$ to be odd, a contradiction.

\subsubsection{Case $\gk=\so(2p)+\so(2q)$, $p\geq q\geq 1$, $p+q\geq 3$.} If we denote $P := \{1,\ldots,p\}$ and $Q:= \{p+1,\ldots,n\}$ we have
$$R_c = \{ \pm \e_i \pm \e_j\ |\ i,j\in P\} \cup \{ \pm \e_i \pm \e_j,\ |\ i,j\in Q\}$$
and therefore, if $P_\s := \s^{-1}(P), Q_\s := \s^{-1}(Q)$ and $k_{P/Q}$ have the same meaning as in the previous subsections, we obtain
$$\frac 12 \d(z) = \sum_{i\in P_\s}(4k_P(i) + 2i -2n)\phi_i \es i +
\sum_{i\in Q_\s}(4k_Q(i) + 2i -2n)\phi_i \es i.$$
If $\l>0$ and $i_P$ is the maximum element in $P_\s$ then $4k_P(i_P) + 2i_P -2n >0$ implies $i_P>n$, a contradiction. \par
If $\l<0$, the maps $P_\s/Q_\s\ni i\mapsto 4k_{P/Q}(i) + 2i -2n$ are negative and strictly increasing. This implies that two elements $i<j$ both in $P_\s$ or both in $Q_\s$ satisfy $j-i>2$, forcing $p=q=1$.\par
If $\l=0$ then $i\equiv n \ (\mbox{{mod}}\ 2)$ for every $i=1,\ldots,n$, a contradiction.
\bigskip\bigskip\bigskip

\end{document}